\definecolor{NoteColor}{rgb}{1,0,0}
\renewcommand{\textsc}{\textcolor{red}}
\newtheorem{theorem}{\rm\bf Theorem}[section]
\newtheorem{proposition}[theorem]{\rm\bf Proposition}
\newtheorem{lemma}[theorem]{\rm\bf Lemma}
\newtheorem{corollary}[theorem]{\rm\bf Corollary}
\newtheorem*{theorem 1}{\rm\bf Proposition 1}
\newtheorem*{theorem 2}{\rm\bf Proposition 2}
\theoremstyle{definition}
\newtheorem{definition}[theorem]{\rm\bf Definition}
\theoremstyle{remark}
\newtheorem{remark}[theorem]{\rm\bf Remark}
\def\interieur#1{\mathord{\mathop{\kern 0pt #1}\limits^\circ}}
\title[Optimal Lipschitz maps on one-holed tori]{Optimal Lipschitz maps on one-holed tori and the Thurston metric theory of Teichm\"uller space}
\author{Yi Huang}
 \address{Yi Huang, Office 261 Jinchunyuan West Building, Yau Mathematical Sciences Center,
Tsinghua University, Haidian District
Beijing 100084, China}
 \email{yihuangmath@tsinghua.edu.cn}
\author{Athanase Papadopoulos}
\address{Athanase Papadopoulos,  Institut de Recherche Math\'ematique Avanc\'ee, Universit{\'e} de Strasbourg and CNRS,
7 rue Ren\'e Descartes,
 67084 Strasbourg Cedex, France}
\email{athanase.papadopoulos@math.unistra.fr}
\date{\today}
\begin{document}

\maketitle
\begin{abstract}

We study Thurston's Lipschitz and curve metrics, as well as the arc metric on the Teichm\"uller space of one-hold tori equipped with complete hyperbolic metrics with boundary holonomy of fixed length. We construct natural Lipschitz maps between two surfaces equipped with such hyperbolic metrics that generalize Thurston's stretch maps and prove the following: 
(1) On the Teichm\"uller space of the torus with one boundary component, the Lipschitz and the curve metrics coincide and define a geodesic metric on this space. 
(2) On the same space, the arc and the curve metrics coincide when the length of the boundary component is $\leq 4\operatorname{arcsinh}(1)$, but differ when the boundary length is large. 
We further apply our stretch map generalization to construct novel Thurston geodesics on the Teichm\"uller spaces of closed hyperbolic surfaces, and use these geodesics to show that the sum-symmetrization of the Thurston metric fails to exhibit Gromov hyperbolicity.

The final version of this paper will appear in Geometriae Dedicata.
\medskip

\noindent  
\emph{Keywords.---} Teichm\"uller space, hyperbolic surface, Lipschitz metric, curve metric, Thurston metric, arc metric, stretch map, stretch path,  partial stretch path, geodesic, Gromov hyperbolicity.
  
\medskip

\noindent    
\emph{AMS classification.---}  32G15, 30F60, 30F10, 53C23, 53C70.
\end{abstract}
     
\medskip
     
  
\section{Introduction}\label{s:intro}
This paper is concerned with Thurston's theory of Lipschitz maps and minimal stretch maps between hyperbolic surfaces and their applications to Teichm\"uller spaces.

\subsection{Background and context}

In his 1986 preprint \emph{Minimal stretch maps between hyperbolic surfaces} \cite{Thurston1986}, Thurston introduced on Teichm\"uller space an asymmetric metric (that is, a distance function which satisfies all the axioms of a metric except the symmetry axiom), which we henceforth refer to as Thurston's metric.  Thurston's motivation was to develop a metric theory of Teichm\"uller space which is completely based on the rigid geometry of hyperbolic surfaces, in contrast to the Teichm\"uller metric, whose development is much more analytical in nature. To supplant the role of quasiconformal mappings in the analytical classical approach to Teichm\"uller theory, Thurston described a class of maps between hyperbolic surfaces which he dubbed \emph{stretch maps}, based on a certain family of geometrically defined Lipschitz maps between hyperbolic ideal triangles. Using stretch maps, he constructed a class of geodesics for Thurston's metric, which he called stretch lines. In particular, Thurston showed that two arbitrary points in Teichm\"uller space are joined by a geodesic realized as a concatenation of stretch lines, thereby showing that Thurston's metric is geodesic. He further showed that Thurston's metric is Finsler and initiated a systematic study of the structure of tangent spaces of Teichm\"uller space equipped with the norm induced by this Finsler structure.

There are strong analogies between Thurston's results in \cite{Thurston1986} and well-known results regarding the Teichm\"uller metric, and much of this owes to similarities between Thurston's stretch maps and Teichm\"uller mappings. Thurston's stretch maps constitute special examples of optimal Lipschitz maps between hyperbolic surfaces $(S,h_1)$ and $(S,h_2)$, stretching the leaves of a geodesic lamination and contracting the leaves of a partially defined measured foliation which orthogonally intersects the leaves of the lamination, whereas Teichm\"uller mappings yield optimal Lipschitz maps between collections $\{(S,\hat{h}_1)\}$ and $\{(S,\hat{h}_2)\}$ of special singular Euclidean metrics, stretching the leaves of a measured foliation, which are geodesics for the underlying Euclidean structure, and contracting the leaves of a transverse measured foliation which intersects it orthogonally, which are also geodesics. The (a priori) flexibility in the domain and codomain for the singular Euclidean metric Lipschitz optimization problem contrasts with the uniqueness of its solution map: there is a unique Teichm\"uller mapping which optimizes the Lipschitz constant between certain (unique) domain and codomain metrics among the $\{(S,\hat{h}_i)\}$. On the other hand, for the hyperbolic metric Lipschitz optimization problem, the domain and codomain metrics are uniquely specified, but there is flexibility in the optimal map.\medskip

Thurston's theory allows for hyperbolic surfaces (of finite type) with cusps, but not for surfaces with non-empty geodesic boundary.  Recall that in Thurston's setting, there are two equivalent ways of describing Thurston's metric:
\begin{itemize}
\item
one approach uses the best Lipschitz constant among homeomorphisms between marked hyperbolic surfaces as a way of defining a distance between those surfaces. We call the asymmetric metric defined in this manner the  \emph{Lipschitz metric} (see Definition~\ref{def:Lip});
\item
the other approach uses a ratio comparison between lengths of simple closed geodesics, and we refer to the asymmetric metric defined in this manner the \emph{curve metric} (see Definition \ref{def:curve}).
\end{itemize}
The equality between the Lipschitz metric and the  curve metric \cite[Theorem~8.5]{Thurston1986} is a crowning achievement of Thurston's work on compact and cusped surfaces, but fails to be true in the setting of surfaces with boundary. Specifically, the distance between two points in the Teichm\"uller space between surfaces with boundary with respect to the Lipschitz metric is always positive, but may take negative values for the curve metric \cite{2009h} (when boundary length is variable). One method \cite{2009f} of remedying the potential negativity of the curve metric for surfaces with boundary, is to add the set of arcs joining boundary components to the set of curves (see Definition~\ref{d:arc}). Another approach is to constrain boundary lengths of the surfaces considered \cite[Theorem 7.9]{HS}.

In the present paper, we introduce new maps between hyperbolic surfaces which we call \emph{partial stretch maps}, which generalize Thurston's stretch maps to the setting of surfaces with boundary and allow us to study the Lipschitz metric geometry of the Teichm\"uller space of surfaces with boundary. 

\subsection{Notation}

 Before stating our main, results, we introduce some notation that will be used in the rest of this paper.

In this paper $S=S_{g,n}$ is an oriented topological surface of finite type and of negative Euler characteristic, with genus $g\geq0$ and $n\geq0$ (open) borders labeled from $1$ to $n$. We shall consider the following variants of the Teichm\"uller space of $S$:
\begin{itemize}
\item
$\mathcal{T}(S)$ is the space of homotopy classes of complete hyperbolic structures on $S$, where both cuspidal and hyperbolic boundary holonomy are admitted;
\item
$\mathcal{T}(S,\vec{b}=b_1,\ldots,b_n)$ is the subset of $\mathcal{T}(S)$ where for $k=1,\ldots,n$,  the geodesic representative of each $k$-th boundary has length $b_k\in[0,\infty)$ and where $b_k=0$ signifies a cusp;
\item
$\mathcal{T}:=\mathcal{T}(S,\vec{0})$ is the subset of $\mathcal{T}(S)$ consisting of homotopy classes of complete finite-area hyperbolic structures on $S$ (cusps are admitted, but not boundary components).
 \end{itemize}
 
We will, at times, need to consider the convex core of a complete hyperbolic surface $(S,h)$ of infinite area (with funnels) and we denote this convex core by $(\bar{S},\bar{h})$. In other words, $\bar{S}$  is a compact subsurface, with boundary, of $S$, obtained by cutting $(S,h)$ along the unique closed geodesic in the homotopy class of each boundary and removing all of the non-compact components (i.e.: the funnels), and $\bar{h}$ is obtained from $h$ by restricting to $\bar{S}$ a homotopy representative of $h$ which is geodesic on the boundary of $\bar{S}\subset S$. In this context, we adopt the notation $\mathcal{T}(\bar{S})$ and $\mathcal{T}(\bar{S},\vec{b})$ to refer to Teichm\"uller spaces of (finite-area) hyperbolic surfaces with geodesic boundary. Note that there is a natural identification between $\mathcal{T}(S)$ and $\mathcal{T}(\bar{S})$ and  between $\mathcal{T}(S,\vec{b})$ and $\mathcal{T}(\bar{S},\vec{b})$ via the map that takes a complete marked hyperbolic surface to its convex core.

\subsection{Paper outline and main results}

This paper is organized as follows: in \S\ref{s:review}, we briefly review Thurston's metric for surfaces without boundary, and the rest of the paper paper falls into two parts. The first part (\S\ref{s:torus} and \S\ref{s:one-holed}) is centered on the case of the one-holed torus, whereas the second part (\S \ref{applications}) applies to surfaces of greater topological generality. \medskip

In \S\ref{s:torus}, we construct partial stretch maps, which are natural generalizations of Thurston's stretch maps, for one-holed tori $S_{1,1}$. Specifically, we construct piecewise smooth homeomorphisms between complete hyperbolic metrics on $S_{1,1}$ with optimal Lipschitz constant.  Using partial stretch maps, we establish the following:

\medskip

\noindent\textbf{Theorem~\ref{thm:L=K}.} \textit{The   Lipschitz metric and the curve metric  on the Teichm\"uller space $\mathcal{T}(S_{1,1},b)$ coincide. Furthermore, this space, equipped with one of these metrics,  is a geodesic  space.}
 
\medskip

 \noindent\textbf{Theorem \ref{th:KA}.} \textit{The curve metric and the arc metric on $\mathcal{T}(S_{1,1},b)$ coincide if the boundary length $b$ satisfies $b\leq 4\operatorname{arcsinh}(1)$.}
 
 \medskip
 
In contrast, we have the following:

\medskip

 \noindent \textbf{Theorem \ref{th:inequality}.}  \textit{The arc metric on $\mathcal{T}(S_{1,1},b)$ is strictly greater than the curve metric for all sufficiently large $b$.}
 
\medskip
 
We use our construction of partial stretch maps on one-holed tori to obtain new classes of geodesics for the Thurston metric on Teichm\"uller spaces of surfaces without boundary and on Teichm\"uller  spaces of surfaces with boundary and with fixed boundary lengths. In particular, we give a construction of two-way Thurston geodesics (that is, segments which are geodesics when they are traversed in both senses) between triples of points in the Teichm\"uller space $\mathcal{T}(S_g)$ of a closed surface $S_g$ of any genus $\geq 2$ which produce arbitrarily ``fat'' metric triangles Teichm\"uller space. This leads us to address the question of the Gromov-hyperbolicity of the sum symmetrization of the Thurston metric, given by
\[
d_{\mathrm{sum}}(h_0,h_1):=K(h_0,h_1)+K(h_1,h_0), 
\] 
where $K$ denotes Thurston's curve metric on  $\mathcal{T}(S_g)$.\medskip

\noindent \textbf{Theorem~\ref{thm:nothyperbolic}.} \textit{The metric $d_{\mathrm{sum}}$  $\mathcal{T}(S_g)$ is not Gromov hyperbolic.}

\medskip

 We develop many of these ideas further in an upcoming paper \cite{HP} for surfaces of general (finite) topological type.

\subsection*{Acknowledgements}

We thank Kasra Rafi and David Dumas for correspondence and for his interest in our work, Fran\c{c}ois Gu\'{e}ritaud for his very thoughtful and enlightening responses to our many questions and Vincent Alberge who read a preliminary version of this paper. We also thank the referee whose corrections and suggestions substantially improved this paper.

\section{A review of Thurston's metric and the arc metric}\label{s:review}
 
 \subsection{Thurston's asymmetric metric}\label{s:asym}

Thurston defined in \cite{Thurston1986} two asymmetric metrics on the Teichm\"uller space $\mathcal{T}=\mathcal{T}(S,\vec{0})$ of $S$. We recall these definitions: 
 
\begin{definition}[Thurston's Lipschitz metric]\label{def:Lip}
Let $h_0$ and $h_1$ be two hyperbolic structures on $S$ and let $\varphi:(S,h_0)\to (S,h_1)$ be a homeomorphism homotopic to the identity map on $S$. The \emph{Lipschitz constant} $\hbox{Lip}(\varphi)$ of $\varphi$ is the quantity
\begin{displaymath}
\label{Lip}
\hbox{Lip}(\varphi)
:=\sup_{x\neq y\in S}
\frac{d_{h_1}\big{(}\varphi(x),\varphi(y)\big{)}}
{d_{h_0}\big{(}x,y\big{)}}.
\end{displaymath}
We denote by $L(h_0,h_1)$ the infimum of the Lipschitz constant over all homeomorphisms
  $\varphi: (S,h_0)\to (S,h_1)$ which are homotopic to the identity:
\begin{align}
\label{L}
L(h_0,h_1):=
\inf_{\varphi\sim\mathrm{id}_{S}}
\log\,\hbox{Lip}(\varphi).
\end{align}
The quantity $L(h_0,h_1)$ depends only on the homotopy classes of $h_0$ and $h_1$, and therefore descends to a function on $\mathcal{T} \times \mathcal{T}$. Thurston showed in \cite[\S 2]{Thurston1986} that $L$ satisfies all (distance) metric axioms except for symmetry. In fact, he showed the symmetry axiom fails: there exist hyperbolic structures $h_0$ and $h_1$ on $S$ such that $L(h_0,h_1)\neq L(h_1,h_0)$. For simplicity, we refer to such asymmetric metrics as metrics and we refer to $L(\cdot,\cdot)$ as the \emph{Lipschitz metric}.
\end{definition}

We denote by $\mathcal{S}$ the set of homotopy classes of essential simple closed curves on $S$, i.e. curves which are neither null-homotopic nor homotopic to a puncture. Note that with our definition, for   surfaces with boundary, an essential curve may be homotopic to a boundary component. Likewise, on a hyperbolic surface with funnels, an essential curve may be homotopic to the core curve of a funnel.

\begin{definition}[Thurston's curve metric] \label{def:curve}
Consider the quantity
\begin{align}
\label{K}
K(h_0,h_1):=
\sup_{\gamma\in\mathcal{S}}
\log\,\frac{l_{h_1}(\gamma)}{l_{h_0}(\gamma)}.
\end{align}
Thurston showed \cite[\S2]{Thurston1986} that $K$ also defines an asymmetric metric on $\mathcal{T}$. We refer to $K(\cdot,\cdot)$ as the \emph{curve metric}.
\end{definition}

For any hyperbolic structure $h$ on $S$, the length function $\l_h:\mathcal{S}\to\mathbb{R}$ defined on simple closed curves extends continuously to a function on the space $\mathcal{ML}$ of compactly supported measured laminations on $S$. (An early definition of this extension is contained in \cite[p. 24]{Kerckhoff}.) We denote this extension with the same notation $\l_h:\mathcal{ML}\to\mathbb{R}$. This function is positively homogeneous, that is, it satisfies $\l_h(r\mu)= r\l_h(\mu)$ for any $\mu$ in $\mathcal{ML}$  and for any $r>0$. By density of $\mathbb{R}_+\cdot\mathcal{S}$ in $\mathcal{ML}$, the supremum in (\ref{K}) can be taken over the elements of the space  $\mathcal{PML}$ of projective compactly supported laminations on $S$:
\begin{align}
\label{K1}
K(h_0,h_1):=
\sup_{[\mu]\in\mathcal{PML}}
\log\,\frac{l_{h_1}([\mu])}{l_{h_0}([\mu])}
\end{align}
where we denote by $[\mu]$ the projective equivalence class of an element $\mu$ in $\mathcal{ML}$, cf.   \cite[p. 4]{Thurston1986}. The main advantage of (\ref{K1}) over (\ref{K}) is that since $\mathcal{PML}$ is compact (this is a result of Thurston, cf. \cite{Thurston-BAMS} where the result is expressed in the equivalent form that uses measured foliations instead of measured laminations), the supremum in (\ref{K1}) is attained.

In the same paper, Thurston proved that $K\equiv L$  \cite[Theorem~8.5]{Thurston1986} and that this gives a geodesic metric on the Teichm\"uller space $\mathcal{T}$ of complete finite-area metrics on $S$.

 \subsection{$k$-Lipschitz maps and Thurston geodesics}
 
 Thurston constructed a class of distinguished geodesics for the metric $L$ (or, equivalently, for the metric $K$). His construction is based on certain Lipschitz maps between ideal hyperbolic triangles that we now recall. 
 
 Consider the most symmetric foliation by horocycles of the ideal triangle. This is a foliation of the three cusps of the triangle by horocyclic segments perpendicularly interpolating between boundary components, with a central unfoliated region bounded by three horocyclic segments which meet tangentially at their ends (see Figure~\ref{fig:stretch}). We refer to the three boundary points where two distinct horocycle leaves meet as \emph{anchor points}.
 
\begin{definition}[$k$-expansion map]
\label{def:stretch}
For any given $k\geq 1$, the $k$-expansion map between two ideal triangles is defined to be the identity on the central unfoliated region and  to send each horocycle at distance $d\geq 0$ from this central region onto the horocycle centered at the same point at infinity and at distance $kd$ from this unfoliated region, each horocycle being mapped linearly with respect to its parametrization by arclength. 
\end{definition}

\begin{figure}[!ht]
\centering
\includegraphics[width=0.8\linewidth]{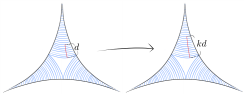}   
\caption{\small {The $k$-expansion map of an ideal hyperbolic triangle.}}   
\label{fig:stretch}
\end{figure}

Consider a geodesic lamination $\lambda$ which is maximal in the sense that it is not contained in any strictly larger geodesic lamination. In such a setting, maximality is equivalent to the fact that $S\setminus\lambda$ is a union of ideal triangles. Thurston utilized $k$-expansion maps to construct rays (that is, one-parameter families parametrized by $[0,\infty)$) of complete finite-area hyperbolic metrics on $S$
\[
h_t:=\operatorname{stretch}(h_0,\lambda,t)\text{, for }t\geq0,
\] 
where an initial hyperbolic metric $h_0$ is stretched along $\lambda$ by a factor of $e^t$, and replaces the metric on the complementary triangles with the pullback (hyperbolic) metric with respect to $e^t$-expansion maps on ideal triangles. Consequentially, the identity map 
\[
\mathrm{id}_S:(S,h_0)\to(S,h_t)
\]
is an $e^t$-Lipschitz map for every $t\geq0$. We refer to these maps as \emph{stretch maps}.\medskip

Thurston's stretch map construction is clearly well-defined when $\lambda$ consists of finitely many leaves, but careful analysis is required when $\lambda$ has (uncountably) infinitely many leaves --- Thurston did this in \cite[\S4]{Thurston1986}. Stretch maps varied over $t$ yield geodesic rays $(S,h_t)$ for the Thurston metric. We refer to geodesic segments of $(S,h_t)$ as \emph{stretch paths}; they are central to the results developed in \cite{Thurston1986}.

\subsection{Thurston metric for bordered surfaces}\label{s:thumetforbordered}

Neither the Lipschitz metric nor the curve metric, as respectively expressed by \eqref{L} and \eqref{K}, form (asymmetric) metrics on $\mathcal{T}(S)$ because they assume nonpositive values on particular ordered pairs of metrics, see \cite[Theorem~1.8]{GK}, \cite{Parlier} and  \cite[Theorem~2.4]{2009h}; in the last reference, it is shown that they assume negative values. However, \eqref{L} and \eqref{K} \emph{do} give positive (asymmetric) metrics when restricted to Teichm\"uller spaces $\mathcal{T}(S,\vec{b})$ of marked hyperbolic surfaces with fixed boundary holonomy \cite[Theorem 7.9]{HS}.

In \cite{GK}, Gu\'{e}ritaud and Kassel study an alternative form $L'$ of Thurston's Lipschitz metric whereby the infimum is taken over all Lipschitz maps homotopic to the identity, rather than just homeomorphisms. In the setting of working with surfaces without boundary, this alternative form of the Lipschitz metric is, in fact, equivalent. However, to the best of our knowledge, this is still open for bordered surfaces (see, for example, \cite[Conjecture~1.6 and \S2.2]{AD}). For this weakened Lipschitz metric, however, Gu\'{e}ritaud and Kassel \cite[Corollary~1.12]{GK} showed that $L'$ and $K$ are equal whenever either is positive. Combined with out previous remarks, we see that the metrics $K$ and $L'$ agree on $\mathcal{T}(S,\vec{b})$ and naturally generalize the Thurston metric. As a small technicality, Gu\'{e}ritaud and Kassel did not investigate the positivity of the curve ratio metric (or Lipschitz metric) on $\mathcal{T}(S,\vec{b})$, and instead achieve positivity by adjusting the Thurston metric with critical exponent renormalization factors \cite[Equation~(1.6)]{GK}. The benefit of their approach is that they obtained a metric for a very general class of representations (or rather, characters) which in turn encode very different geometric objects. Gu\'{e}ritaud has privately communicated to us a succinct alternative argument (to that used in \cite[Theorem 7.9]{HS}) for how to see the positivity of the na\"ive generalization of Thurston's metric on $\mathcal{T}(S,\vec{b})$ via his work with Kassel and Danciger \cite{DGK}.\medskip

In order to prove that $L'\equiv K$, Gu\'eritaud and Kassel established an equivariant form of the Kirszbraun--Valentine theorem as a machine for producing Lipschitz maps. It is unknown whether the maps so-produced are homeomorphic (or even injective), and it is tempting, therefore, to wonder if one might be able to reuse Thurston's stretch map construction to build optimal Lipschitz homeomorphisms and hence strengthen the result to $L\equiv K$. Unfortunately, stretch maps with respect to maximal geodesic laminations (i.e. those whose complementary regions are ideal triangles) generally distort boundary holonomy and hence do not lie in $\mathcal{T}(S,\vec{b})$. There are, however, special cases where one is able to recast Thurston's construction in a clever manner (e.g. Lenzhen, Rafi and Tao's paper \cite[\S6]{LRT}).

\subsection{The arc metric} In this subsection, $S$ is a surface with geodesic boundary equipped with a finite-area hyperbolic metric $h$. An \emph{arc} in $S$ is the homeomorphic image of a closed interval of $\mathbb{R}$ in $S$ such that the image of the interior (respectively the boundary)  of this interval is in the interior (respectively the boundary) of $S$. An arc in $S$  is said to be \emph{essential} if there is no disc in  this surface whose image is the union of this arc with a segment contained in the boundary $\partial S$. An \emph{orthogeodesic} in $S$ is a geodesic arc which makes a right angle at each  extremity, that is, at each intersection point with $\partial S$. 

Each boundary-relative homotopy class of arcs in $S$ contains a unique orthogeodesic; this is an analogue for the case of a surface with boundary of the result stating that each homotopy class of essential simple closed curve in $S$ contains a unique closed geodesic. (The former claim may be deduced from the latter by doubling the surface.)

We introduce the following notation:
\begin{itemize}
\item
$\mathcal{A}$ is the set of boundary-relative homotopy classes of arcs in $S$;
\item
$\mathcal{B}$ is the set of homotopy classes of  simple closed  curves in  $S$ that are homotopic to a boundary component.
\end{itemize}

The \emph{arc metric} on $\mathcal{T}(S)$, first defined in \cite{2009f}, based on lengths of simple orthogeodesics and boundary components, is an asymmetric metric on $\mathcal{T}(S)$ which is an analogue of the Thurston metric defined in \cite{Thurston1986} on the Teichm\"uller space of surfaces without boundary. We recall its definition:

\begin{definition}[The arc metric] \label{d:arc}
Given two hyperbolic metrics $h_0$ and $h_1$ on $S$, consider the quantity
\begin{align}
\label{A}
A(h_0,h_1):=\sup_{\alpha\in\mathcal{A}\cup \mathcal{B}}
\log\,\frac{l_{h_1}(\alpha)}{l_{h_0}(\alpha)},
\end{align}
where for each $\alpha$ in $\mathcal{A}$, $l_{h}(\alpha)$ denotes the length of the unique orthogeodesic representative of $\alpha$ in its boundary-relative homotopy class. 

From the definition, we see that the quantity $A(h_0,h_1)$ does not depend on the homotopy classes of the metrics $h_0$ and $h_1$, therefore the same formula induces a function $A:\mathcal{T}(S)\times \mathcal{T}(S)\to\mathbb{R}$. Like the Thurston metric for surfaces without boundary,  $A$ is an asymmetric metric on the Teichm\"uller space of $S$ (see \cite{2009f}), and we refer to it as the \emph{arc metric} on this space.
\end{definition}

It follows from the definitions that the set $\mathcal{B}$  is contained in the set $\mathcal{S}$ of homotopy classes of simple closed curves in $S$.  Later in this paper, we shall use equation (\ref{A}) to define a metric on the Teichm\"uller space of a one-holed torus on which the hyperbolic structures have fixed boundary length. In this case, the supremum on the right hand side in (\ref{A}) may be taken on $\mathcal{A}$ instead of $\mathcal{A}\cup \mathcal{B}$.

The definition of the arc metric makes it formally an analogue to the Thurston metric for surface with boundary. But there are deeper relations between the arc metric and the Thurston metric. For example, the arc metric  is equal to the pullback of the Thurston metric with respect to the doubling embedding $\mathcal{T}(S)\hookrightarrow\mathcal{T}(S^d)$, where $S^d$ is the surface without boundary obtained by doubling $S$ along its boundary \cite[Corollary~2.9]{2009f}. Furthermore, there is a precise sense in which the Thurston metric on the Teichm\"uller space of a surface with cusps is a limit of arc metrics on Teichm\"uller spaces of surfaces with boundary, as the lengths of the boundary components tend to zero, see \cite{PS}.

The following theorem gives an alternative (and in some sense, the original \cite[Definition~2.1]{2009f}) definition of the arc metric:

\begin{theorem}
\label{AKcompare}
For any two elements  $h_0$ and $h_1$ in the Teichm\"uller space $\mathcal{T}(S)$, we have
\[A(h_0,h_1)=
\sup_{\alpha\in\mathcal{A}\cup\mathcal{S}}
\log\,\frac{l_{h_1}(\alpha)}{l_{h_0}(\alpha)}.\]
 
\end{theorem}
This theorem is proved in \cite[Proposition~2.13]{2009f}. The equality  $A\geq K$ follows immediately from the definition. The reverse inequality  is based on the fact that any sequence of arcs in $S$ which are Dehn-twisted a high number of times about a simple closed curve $\gamma$ roughly detects the length ratio for $\gamma$.

The study of geodesics for the arc metric has hitherto been based on explicit constructions of Lipschitz maps between right angled hexagons (see \cite{2009j} and its generalization in \cite{2015-Yamada1}). Recently, other constructions were obtained by Alessandrini and Disarlo, see \cite{AD}.

\section{Partial stretch maps on ideal Saccheri quadrilaterals and one-holed tori}
\label{s:torus}

\subsection{Saccheri quadrilaterals}
Classically, a \emph{Saccheri quadrilateral} in the hyperbolic plane is a geodesic convex quadrilateral with two opposite sides of equal length perpendicular to a common third side. Saccheri quadrilateral admit two real dimensions worth of moduli, and hence play a hyperbolic geometric role somewhat akin to that of rectangles in Euclidean geometry.  See the quadrilateral $ABCD$ in the left-hand side of Figure~\ref{fig:saccheri}, where the two equal sides $AD$ and $BC$ are perpendicular to $AB$. The angles at $C$ and $D$ are then necessarily equal and acute. On the right-hand side of this figure, we have represented an \emph{ideal Saccheri quadrilateral}, by which we mean that the vertices $C$ and $D$ are ideal points (that is, points at infinity of the hyperbolic plane) and (hence) the sides $AD$ and $BC$ have infinite length. In this case, the angles at $C$ and $D$ are $0$.\medskip

The isometry type of the ideal Saccheri quadrilateral $ABCD$ is determined by the length of $AB$. Note that an ideal triangle may be regarded as a limit of a family of such ideal Saccheri quadrilaterals in which the length of the side $AB$ tends to $0$.

\begin{figure}[!ht]
\centering
\includegraphics[width=0.6\linewidth]{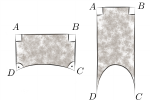}
\caption{\small {A Saccheri quadrilateral (left) and an ideal Saccheri quadrilateral (right)}}    
\label{fig:saccheri}
\end{figure}

 We shall also work with \emph{extended ideal Saccheri quadrilaterals}. These are the infinite-area quadrilaterals obtained by extending an ideal Saccheri quadrilateral $ABCD$ of Figure \ref{fig:saccheri}
 (regarded as being embedded in the hyperbolic plane $\mathbb{H}^2$) to the complete infinite-area convex domain bordered by $CD$ and the two bi-infinite geodesics respectively containing  $AD$ and $BC$.

\subsection{Partial horocyclic foliations}
An extended ideal Saccheri quadrilateral has two ideal vertices. We foliate the neighborhood of each such vertex with horocycles centered at that   vertex. We extend this foliation in a reflection-symmetrical manner until the two foliations meet tangentially at a point of the bi-infinite edge (see Figure \ref{fig:horocyclic}). We refer to this intersection point as an \emph{anchor point}. This reflection-symmetric partial foliation of the quadrilateral is uniquely determined, and in the special case where the length of $AB$ is $0$, it corresponds to ``two out of three sectors'' of the horocyclic foliation of the ideal triangle employed by Thurston (Figure \ref{fig:stretch}).

Any ideal Saccheri quadrilateral is equipped with a horocyclic foliation induced by the one of the extended ideal Saccheri quadrilateral which contain it, see Figure~\ref{fig:horocyclic}. Cases  (a), (b) and (c) in this figure illustrate each of the three (mutually exclusive) situations that may arise with regards to the partial horocyclic foliation:
\begin{itemize}
\item
(a) occurs when the length of $AB$ is smaller than $2\operatorname{arcsinh}(1)$;
\item
(b) occurs when this length is equal to $2\operatorname{arcsinh}(1)$, and
\item
(c) occurs when this length is greater than $2\operatorname{arcsinh}(1)$.
\end{itemize}

\begin{figure}[!ht]
\centering
\includegraphics[width=0.80\linewidth]{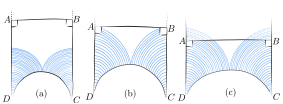}    
\caption{\small {Partial horocyclic foliations of various ideal Saccheri quadrilaterals and their extensions (dotted lines).}}    
\label{fig:horocyclic}
\end{figure}
%
%
%
\subsection{$k$-expansion maps}

 For any $k\geq 1$, we define the  \emph{$k$-expansion maps} of any extended ideal Saccheri quadrilateral  in much the same way as Thurston did for ideal triangles (see Definition~\ref{def:stretch}). Specifically, for a given $k\geq 1$, this map is equal to the identity map on the unfoliated region and sends any leaf of the horocyclic foliation situated at distance $d\geq 0$ from the unfoliated region to the one at distance $kd$ from that region, mapping linearly with respect to arclength on each horocyclic leaf (see Figure~\ref{fig:Saccheri-stretch}). 
 
%
%
%
%
%
 
\begin{figure}[!ht]
\centering
\includegraphics[width=0.8\linewidth]{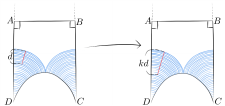}    
\caption{\small {The $k$-expansion map of an ideal Saccheri quadrilateral}}    
\label{fig:Saccheri-stretch}
\end{figure}

\begin{proposition} Consider an ideal Saccheri quadrilateral and the extended quadrilateral which contains it.  We have the following:

(1) The $k$-expansion map of the extended ideal Saccheri quadrilateral has Lipschitz constant precisely equal to $k$. 
 
 (2) In the cases (a) and (b) of Figure~\ref{fig:horocyclic}, this $k$-expansion map of the extended ideal Saccheri quadrilateral  induces a homeomorphism of the original ideal Saccheri quadrilateral. This induced homeomorphism has Lipschitz constant $k$.
\end{proposition}

\begin{proof} 
The same techniques that are used to show that ideal triangle stretch maps are $k$-Lipschitz apply here as well. They are based on the existence of orthogonal preserved partial foliations, apply in this case as well. Proof details are omitted by Thurston in \cite[Proposition~2.2]{Thurston1986} but are easily recovered from (for example) the computation made in \cite[\S 2]{2009j}. We may, in essence, ignore the unfoliated region. On the horocyclically foliated regions, the $k$-expansion map expands the orthogonal geodesic foliation by $k$ and contracts the horocyclic foliation. The orthogonality of the two invariant foliations ensures $k$-Lipschitz-ness.

\end{proof}

\subsection{Partial stretch maps on one-holed tori}\label{sub:partial-stretch}
In the  rest of this section, we set $S=S_{1,1}$ in notation such as $\mathcal{T}(S,b)$ to denote the Teichm\"uller space of one-holed hyperbolic tori with prescribed boundary holonomy.\medskip 

We now construct $k$-Lipschitz maps between hyperbolic one-holed tori using $k$-expansion maps between ideal Saccheri quadrilaterals.

In Lemma \ref{thm:chainrecurrent}, we shall make use of Thurston's theory of train tracks and train track approximation of measured geodesic laminations as it is presented in \S 8.9 of his Princeton lecture notes \cite{T-Notes3}. A measured geodesic lamination is said to be \emph{rational} if its support is the union of simple closed geodesics. Otherwise it is said to be \emph{irrational}. In the train track coordinates charts, this distinction corresponds to the usual distinction between rational and irrational coordinates of the lamination, up to a multiplicative constant.

A compactly supported geodesic lamination on a hyperbolic surface is said to be \emph{chain recurrent}  if it is the limit of simple closed geodesics in the Hausdorff topology of the set of compact subsets of the surface. The support of a chain recurrent geodesic lamination, as a Hausdorff limit of compact subsets, is necessarily compact. We refer the reader to \cite{Thurston1986} p. 25 for the basic properties of chain recurrent laminations.

\begin{lemma}[Chain recurrence characterization]
\label{thm:chainrecurrent}
A one-holed torus admits in the interior of its convex core three types of geodesic laminations  which are chain recurrent; see Figure~\ref{fig:chainrecurrence}:
\begin{enumerate}
\item
a simple closed geodesic $\gamma$;
\item
the union of a simple closed geodesic $\gamma$ and a bi-infinite geodesic $\ell$ which spirals to $\gamma$ from one side of $\gamma$ and to $\gamma^{-1}$ from the other side;
\item
a geodesic lamination $\mu$ with uncountably many leaves corresponding to the support of some irrational measured geodesic lamination $[\mu]$. 
\end{enumerate}
\end{lemma}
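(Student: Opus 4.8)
The plan is to work directly from the characterization of chain recurrence as Hausdorff approximability by simple closed geodesics, exploiting the exceptionally low complexity of $S_{1,1}$. First I would record the general structure theorem for geodesic laminations: any such lamination is a disjoint union of finitely many minimal sublaminations together with finitely many isolated leaves, each end of which spirals onto a minimal sublamination. Because we work in the interior of the convex core of a \emph{geodesically bordered} torus, there are no cusps to escape into and no leaf may accumulate onto the boundary geodesic; hence every end of every leaf must accumulate on a minimal sublamination lying in the interior. I would then invoke two facts that make $S_{1,1}$ special. First, any two disjoint essential non-peripheral simple closed geodesics are isotopic, since cutting along one yields a pair of pants, which carries no essential non-peripheral simple closed curve; so the only closed minimal sublamination available is a single $\gamma$. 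Second, a minimal filling lamination is already maximal and leaves no room for extra leaves. Together these force every geodesic lamination in the interior of the convex core to be either a single $\gamma$, a minimal filling lamination $\mu$, or the union of a single $\gamma$ with isolated leaves spiralling onto it at both ends.

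To pin down the spiralling case I would cut along $\gamma$, obtaining a pair of pants $P$ whose two cuffs are the copies $\gamma^{+},\gamma^{-}$ of $\gamma$; a leaf $\ell$ disjoint from $\gamma$ and accumulating only on $\gamma$ becomes a simple complete geodesic in $P$ spiralling onto these cuffs, and simplicity together with disjointness forces essentially a single leaf joining $\gamma^{+}$ to $\gamma^{-}$, i.e. a leaf spiralling onto $\gamma$ from one side and onto $\gamma^{-1}$ from the other, which is precisely type (2). It then remains to decide which of these laminations are genuinely chain recurrent, and this is where the slope dynamics enter. Essential non-peripheral simple closed curves on $S_{1,1}$ are parametrized by slopes in $\mathbb{Q}\cup\{\infty\}\subset\mathbb{RP}^{1}$, with $\mathcal{PML}(S_{1,1})\cong\mathbb{RP}^{1}$ and irrational slopes corresponding to minimal filling laminations. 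Given any Hausdorff-convergent sequence of simple closed geodesics $\gamma_{s_n}$, I would pass to a subsequence with $s_n\to s\in\mathbb{RP}^{1}$ and trichotomize: if $s_n$ is eventually equal to a rational $s$ the limit is $\gamma_s$ (type 1); if $s_n\to s$ irrational the lengths tend to infinity and the geodesics fill, with Hausdorff limit the minimal filling lamination of slope $s$ (type 3); and if $s_n\to s$ rational with $s_n\neq s$ the curves wind increasingly about $\gamma_s$ (modeled by $\gamma_{s_n}=T_{\gamma_s}^{\,n}(\delta)$ for a fixed transverse $\delta$), with Hausdorff limit $\gamma_s$ together with a single spiralling leaf, the side of approach $s_n\to s^{\pm}$ fixing the chirality, giving exactly type (2).

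For the converse I would exhibit approximating sequences realizing each type as an honest Hausdorff limit: the constant sequence for (1), Farey approximants of an irrational slope for (3), and Dehn twists $T_{\gamma}^{\,n}(\delta)$ for (2); combined with the trichotomy this identifies the chain recurrent laminations as exactly types (1)--(3). I expect the \emph{main obstacle} to be the limiting analysis of the two nontrivial cases: showing that when $s_n\to s$ is irrational the Hausdorff limit is \emph{exactly} the minimal filling lamination (neither strictly smaller nor carrying spurious extra leaves), and that when $s_n\to s$ is rational the limit is exactly $\gamma_s$ plus a single leaf with the asserted spiralling pattern. Both require quantitative control of how the long geodesics $\gamma_{s_n}$ sit inside a train-track (or $\epsilon$-) neighborhood of the putative limit. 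This is cleanest to handle by lifting to $\mathbb{H}^{2}$ and tracking endpoints on $\partial\mathbb{H}^{2}$, or equivalently by controlling the intersection numbers $i(\gamma_{s_n},\cdot)$, so as to certify that no leaf beyond those listed can survive in the limit and that the spiralling directions are forced by the side from which the slopes approach $s$.
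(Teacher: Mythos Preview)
Your argument is correct and takes a different route from the paper's. The paper proceeds structurally: every geodesic lamination contains the support of a transverse measure, which on $S_{1,1}$ is either a simple closed curve or a minimal irrational lamination; the latter is already maximal among chain recurrent laminations (its complement is a boundary-parallel annulus), while the former extends chain-recurrently in exactly two ways, yielding type~(2). You instead parametrize simple closed curves by $\mathbb{Q}\cup\{\infty\}\subset\mathbb{RP}^1$ and trichotomize the limiting slope of a Hausdorff-convergent sequence, reading off the limit lamination directly. Your approach is more hands-on and manufactures the approximating sequences explicitly; the paper's is terser and transports more readily to surfaces of higher complexity via the measured-sublamination structure theorem. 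One small correction: your claim that in the cut-open pair of pants ``simplicity together with disjointness forces essentially a single leaf joining $\gamma^{+}$ to $\gamma^{-}$'' is not quite right---there \emph{are} simple bi-infinite geodesics in $P$ with both ends spiralling onto the same cuff (the underlying arc separates the remaining two cuffs), and these glue up to laminations $\gamma\cup\ell$ in the interior of the convex core that are \emph{not} chain recurrent. Your slope trichotomy correctly excludes them from arising as Hausdorff limits, so the conclusion stands, but the exclusion is dynamical rather than coming from the pair-of-pants cut alone.
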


\begin{proof}
We first note that all three of these types of geodesic laminations are in fact chain recurrent: case~{(1)} is definitional, case~{(2)} follows from the fact that the spiraling bi-infinite leaf can be written as the Hausdorff limit of a sequence obtained from performing arbitrarily many Dehn twists to any simple closed geodesic transverse to $\gamma$, and case~{(3)} follows from the well-known theorem asserting that homotopy classes of weighted simple closed geodesics  are dense in the space of measured geodesic laminations for the measure topology of this space. In particular, $[\mu]$ is the limit of some sequence of weighted curves with support $\{\gamma_n\}$. Quoting Thurston's proof of \cite[Proposition 8.10.3]{T-Notes3}, ``[for] any point $x$ in the support of a measure $[\mu]$ and any neighborhood $U$ of $x$, the support of a measure close enough to $[\mu]$ must intersect $U$'', we see that $\mu$ must be a subset of the Hausdorff limit $\nu$ of (a subsequence of) the $\gamma_n$. The collar lemma tells us that $\nu$ is supported on the interior of the convex core of $S$. However, $\mu$ is a maximal lamination on the interior of the aforementioned convex core, and hence $\mu$ is precisely equal to $\nu$ and is therefore chain recurrent.\medskip


Conversely, Let $\lambda$ be any chain recurrent geodesic lamination. We know that every compactly supported lamination contains a sublamination which supports a transverse measure (Proposition 8.10.6 of \cite{T-Notes3}). Any such sublamination of $\lambda$ falls either into cases~$(1)$ or $(3)$, which respectively correspond to the support of rational and irrational measured laminations on $S$. Taking a train track approximating this lamination,  one can see that the complement of an irrational measured lamination on a one-holed torus is an annulus homotopy equivalent to the boundary of the convex core of $S$. This implies that case~$(3)$ is already maximal among chain recurrent laminations. On the other hand, simple closed geodesics can be extended while preserving the property of being chain recurrent in precisely two ways, both of which fall into case~$(2)$. This covers all possibilities for $\lambda$.
\end{proof}

\begin{figure}[!ht]
\centering
\includegraphics[width=0.8\linewidth]{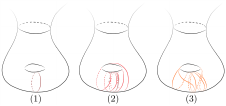}    
\caption{\small {The three types of chain recurrent laminations classified in Lemma~\ref{thm:chainrecurrent}}, the rightmost figure depicts a train-track carrying an irrational lamination.}
\label{fig:chainrecurrence}
\end{figure}

\begin{theorem}[Partial stretch maps]
\label{thm:pstretch}
For any complete hyperbolic metric $h_0$ on $S$ representing an element of $\mathcal{T}(S,b)$ and for any chain recurrent lamination $\lambda$ on $S$, there is a ray
\begin{align}
h_t:=\operatorname{pstretch}(h_0,\lambda,t)\text{, for }t\geq0,
\end{align}
of complete hyperbolic metrics in $\mathcal{T}(S,b)$ such that  
\begin{itemize}
\item
for $0\leq t$, the identity map $\mathrm{id}_S:(S,h_0)\rightarrow(S,h_t)$ is an $e^{t}$-Lipschitz homeomorphism which is an isometry outside of a compact set contained in the convex core of $S$; \item
the geodesic representative of $\lambda$ for $h_0$ is also geodesic for $h_t$, and the identity map 
preserves this set and expands arclength along $\lambda$ by a factor of $e^{t}$.
\end{itemize}
In addition, the path of equivalence classes in $\mathcal{T}(S,b)$  of metrics $(S,h_t)$, for $t\geq 0$, is a geodesic ray for both the Lipschitz metric $L$ and the curve metric $K$, with $L\equiv K$ along $(S,h_t)$. We refer to (equivalence classes of) subsegments of $(S,h_t)$ as \emph{partial stretch paths}.
\end{theorem}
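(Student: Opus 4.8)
The plan is to run Thurston's stretch-map argument with ideal Saccheri quadrilaterals in place of ideal triangles, using Lemma~\ref{thm:chainrecurrent} to organize the three cases and the preceding Proposition (that each $k$-expansion map is exactly $k$-Lipschitz) as the local building block. First I would reduce to the three types of chain recurrent $\lambda$. In types $(1)$ and $(2)$ the lamination has finitely many leaves, and I would complete it if necessary — a simple closed geodesic $\gamma$ is completed, as recorded in Lemma~\ref{thm:chainrecurrent}, by adjoining a spiraling leaf $\ell$ to put us into type $(2)$ — so that the complement of the completed lamination in the convex core $\bar S$ is an ideal Saccheri quadrilateral (up to the hyperelliptic symmetry), with finite base $AB$ lying along the geodesic boundary $\partial\bar S$ and two ideal vertices produced by the spiraling onto $\gamma$. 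Here $|AB|$ is a fixed function of $b$: the threshold $|AB|=2\operatorname{arcsinh}(1)$ separating the foliation cases (a)/(b)/(c) corresponds to $b=4\operatorname{arcsinh}(1)$. On each quadrilateral I set the map to be the $e^{t}$-expansion map (using the extended ideal Saccheri quadrilateral in case (c)); since this map is the identity on the unfoliated region, which abuts $AB$, it fixes the base pointwise and therefore glues consistently across $\partial\bar S$ and across the spiraling leaves to a homeomorphism of $\bar S$, which I extend by the identity (an isometry) on the funnel outside the convex core. Pulling back the hyperbolic metric defines $h_t$, and the semigroup property of expansion maps (an $e^{s}$-expansion followed by an $e^{t-s}$-expansion is an $e^{t}$-expansion on the same foliation) yields the ray structure.

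Next I would verify the listed properties. Because the base is fixed pointwise, $\partial\bar S$ retains length $b$, so $h_t\in\mathcal{T}(S,b)$; because the map is the identity on the unfoliated regions and on the funnel, it is an isometry off a compact subset of the convex core. For the Lipschitz bound I would break a geodesic segment of $(S,h_0)$ into finitely many sub-arcs each lying in a single quadrilateral (or unfoliated/funnel region), on which the Proposition gives expansion by at most $e^{t}$; summing shows $\mathrm{id}_S$ is globally $e^{t}$-Lipschitz. The expansion along $\lambda$ is exact because the leaves of $\lambda$ run in the direction of the orthogonal geodesic foliation, which the $e^{t}$-expansion map stretches by precisely $e^{t}$.

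For the type $(3)$ lamination $\mu$, whose complement is an annulus rather than a quadrilateral, the construction must be obtained as a limit of the finite-leaf constructions along a sequence of simple closed (or type $(2)$) laminations converging to $\mu$ in the Hausdorff topology — exactly the passage Thurston carries out in \cite[\S4]{Thurston1986}. I would show the associated $e^{t}$-expansion maps converge to a well-defined $e^{t}$-Lipschitz homeomorphism that still fixes $\partial\bar S$ and still expands the transverse measure of $\mu$ by $e^{t}$. This limiting/consistency step is the main obstacle, since one must control the maps on the infinitely many leaves and check that both the boundary length and the exact expansion factor survive in the limit.

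Finally I would deduce the metric statements. The $e^{t}$-Lipschitz identity gives $L(h_0,h_t)\le t$, and since an $e^{t}$-Lipschitz map homotopic to the identity sends the $h_0$-geodesic of any curve to a homotopic curve of $h_t$-length at most $e^{t}$ times as long, we have $K\le L$, whence $K(h_0,h_t)\le t$. For the reverse bound I would exhibit curves realizing expansion $e^{t}$: in types $(1)$ and $(2)$ the closed leaf $\gamma$ itself satisfies $l_{h_t}(\gamma)=e^{t}l_{h_0}(\gamma)$, giving $K(h_0,h_t)\ge t$; in type $(3)$ I would use chain recurrence to approximate $\mu$ by simple closed geodesics whose length ratios tend to $e^{t}$. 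Hence $K(h_0,h_t)=L(h_0,h_t)=t$ and $L\equiv K$ along the ray. Running the identical argument for $\mathrm{id}_S:(S,h_s)\to(S,h_t)$ gives $L(h_s,h_t)=K(h_s,h_t)=t-s$, so that $L(h_0,h_s)+L(h_s,h_t)=t=L(h_0,h_t)$, exhibiting $(S,h_t)$ as a geodesic ray for both metrics.
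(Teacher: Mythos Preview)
Your outline is close to the paper's in spirit, but there is a genuine gap in case~(c), and your treatment of type~$(3)$ diverges from both the paper and from what Thurston's \S4 actually does.

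\textbf{The case (c) gap.} You assert that the $e^{t}$-expansion map ``fixes the base $AB$ pointwise'' because the unfoliated region abuts $AB$, and from this you conclude both that the boundary length stays equal to $b$ and that you may extend by the identity on the funnel. This is only correct when $|AB|\le 2\operatorname{arcsinh}(1)$, i.e.\ $b\le 4\operatorname{arcsinh}(1)$. In case~(c) the anchor point lies \emph{beyond} $AB$, the horocyclic foliation overruns $AB$ into the extended quadrilateral, and the $e^{t}$-expansion map genuinely moves points of $AB$ (this is exactly why the paper introduces the extended quadrilateral). Consequently the map does not restrict to a self-map of the convex core, your ``extend by the identity on the funnel'' step is inconsistent with the map you have already defined on the extended region, and your argument that $\ell_{h_t}(\partial\bar S)=b$ fails. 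The paper's fix is different: it observes that the expansion map is the identity \emph{outside a compact set} (not on all of $AB$) and then argues that the boundary \emph{monodromy} is unchanged via the developing map; the boundary geodesic of $h_t$ is not pointwise the same curve, but its holonomy length is still $b$.

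\textbf{The type~$(3)$ approach.} You propose to build the irrational case as a Hausdorff limit of type~$(2)$ constructions and say this is ``exactly the passage Thurston carries out in \cite[\S4]{Thurston1986}.'' It is not: Thurston's \S4 (Proposition~4.1) shows how to assemble a hyperbolic metric on a neighborhood of a lamination from prescribed sharpness data on the complementary pieces; it is a gluing/extension lemma, not a limiting argument. The paper follows this route directly: it cuts $S-\mu$ along the two orthogeodesic rays $\sigma_1,\sigma_2$ spiraling toward $\mu$, obtains two (extended) ideal Saccheri quadrilaterals interchanged by the hyperelliptic involution so that the horocyclic foliations match, defines the $e^{t}$-expansion on $S-\mu$ in one shot, and then invokes Thurston's Proposition~4.1 to extend the pulled-back metric across $\mu$ via rescaled sharpness functions. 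Your limiting scheme might be made to work, but it is harder (you yourself flag the convergence of the maps and the survival of the exact expansion factor as the main obstacle), and it is not needed. Relatedly, even in type~$(2)$ you have not said how the crown $S-(\gamma\cup\ell)$ becomes Saccheri quadrilaterals; the paper makes this explicit by cutting along the $\iota$-paired orthogeodesic rays, which is also what guarantees that the two partial horocyclic foliations line up.

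Your final paragraph (the $K=L=t$ and additivity argument) matches the paper's and is fine once the construction is in place.
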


\begin{remark} \label{rem:invariant}
Note that in this paper we do not define partial stretch maps for arbitrary maximal (compactly supported) laminations. In particular, we require the stretched lamination to be invariant under the hyperelliptic involution $\iota$ on $S$. Fortunately, this property is satisfied by all chain recurrent laminations as they are Hausdorff limits of simple closed geodesics, and the latter are all invariant under $\iota$.
\end{remark}

In the proof below, we shall talk about \emph{crowned hyperbolic surfaces}. Such a surface is, by definition, homeomorphic to a compact annulus with a certain number of deleted points on one of its boundary components, equipped with a hyperbolic structure so that the boundary component with no points deleted is a closed geodesic, and the other boundary component consists of a union of bi-infinite geodesics converging from each side to a deleted point. See Figure \ref{fig:crown}.

\begin{proof}[Proof of Theorem \ref{thm:pstretch}]
Lemma~\ref{thm:chainrecurrent} lets us deal with this construction on a case-by-case basis.\medskip

\noindent\textbf{When $\lambda$ is irrational.} This corresponds to case~$(3)$ of Lemma~\ref{thm:chainrecurrent}. The irrational lamination $\lambda$ is fixed under the hyperelliptic involution $\iota$ on $S$ since it is the limit of simple closed geodesics --- which are fixed under $\iota$ (Remark~\ref{rem:invariant}). There are precisely two (simple) orthogeodesic rays $\sigma_1,\sigma_2$ on the convex core of $S$ which launch from the boundary of the convex core of $S$ and spiral toward $\lambda$, and the set $\sigma_1\cup\sigma_2$ is therefore also fixed by $\iota$. Since $\iota$ acts non-trivially on the boundary of the convex core of $S$, the end-points of $\sigma_1$ and $\sigma_2$ must be permuted under $\iota$ and hence $\iota$ permutes $\sigma_1$ and $\sigma_2$. Cutting the convex core of $S$ along $\lambda$, $\sigma_1$ and $\sigma_2$ produces two ideal Saccheri quadrilaterals $Q_1,Q_2$. In particular, due to the $\iota$-invariance of $\sigma_1\cup\sigma_2$, the involution $\iota$ defines an isometry between these two ideal Saccheri quadrilaterals. This in turn means that the extended ideal Saccheri quadrilaterals $\widehat{Q}_i$, obtained by cutting $S-\lambda$ along the bi-infinite geodesics extending $\sigma_1$ and $\sigma_2$, are isometric via $\iota$. We consider the partial horocyclic foliation on  the crowned hyperbolic surface $S-\lambda$ gotten by  gluing the horocyclic partial foliations of the two completed Saccheri ideal quadrilaterals (see Figure~\ref{fig:horocyclic}).  The $k$-expansion map of these quadrilaterals  that use these partial foliations as in Figure \ref{fig:Saccheri-stretch} can be glued together to give a $k$-expansion map on $S-\lambda$ with Lipschitz constant $k$.\medskip

\begin{figure}[!ht]
\centering
\includegraphics[width=0.8\linewidth]{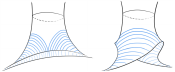}    
\caption{\small {The partial horocylic foliation on $S-\lambda$, viewed from two different perspectives.}}    
\label{fig:crown}
\end{figure}

We appeal now to Thurston's construction in \cite[Proposition~4.1]{Thurston1986}. To begin with, given any hyperbolic structure in the neighborhood of a geodesic lamination $\mu$ on a general hyperbolic surface, we obtain a measured foliation transverse to $\mu$ as well as additional data in a horocyclic neighborhood of each cusp of the surface. Thurston presents this data in the form of a function that he dubbed a \emph{sharpness function}. His  \cite[Proposition~4.1]{Thurston1986} then reverses this procedure to recover a hyperbolic structure. We now make use of this reverse procedure, but first note that there are no cusps on $S$, and hence we need not touch upon the notion of sharpness functions.



We now produce the family $h_t$ of complete hyperbolic metrics on $S$. First, we observe that we can employ \cite[Proposition~4.1]{Thurston1986} essentially without alteration by doubling the convex core of $S$, noting that any sufficiently small neighborhood $N_\epsilon(\lambda)$ of $\lambda$ is also doubled as $\lambda$ is compact and supported on the interior of the convex core. We use the previously constructed $k$-expansion map on $S-\lambda$, with $k=e^t$, to redefine the metric outside of $\lambda$. 
Doing this for each $e^t$ produces  a family $h_t$ of hyperbolic metrics on $S$. Since the $e^t$-expansion map is equal to the identity outside of a compact set, the metrics $h_t$ are all isometric outside of a compact set. The developing map then tells us that the boundary holonomy is independent of $t$. Note  however that if the foliated region on which the partial stretch of the infinite-area surface is made (represented in Figure \ref{fig:crown}) trespasses beyond the geodesic representative of the boundary of the convex core of that surface, then the restriction to the (initial) convex core of the partial stretch path of metrics produces a path of hyperbolic metrics whose boundary components are no longer geodesic. (In particular, the part of the boundary of the initial  convex core which cuts across the foliated region is metrically deformed so that it is no longer geodesic.) Thus, this construction does not give a path in the Teichm\"uller space of the compact surface with boundary and with constant boundary length.

\medskip

\noindent\textbf{When $\lambda$ is a simple closed geodesic.} This corresponds to case~$(1)$ of Lemma~\ref{thm:chainrecurrent}. In this scenario, we extend $\lambda$ to a chain recurrent lamination comprised of $\lambda$ and a bi-infinite simple geodesic spiraling to it from one side and $\lambda^{-1}$ from the other. Thus, we have reduced this case to:
\medskip

\noindent\textbf{Remaining case.} This corresponds to case~$(2)$ of Lemma~\ref{thm:chainrecurrent}, where $\lambda$ is the union $\gamma\cup\ell$ of a simple closed geodesic $\gamma$ and a bi-infinite geodesic $\ell$ spiraling to $\gamma$ on one side of $\gamma$ and to $\gamma^{-1}$ on the other side, for some chosen orientation on $\gamma$. Equivalently, considering that the spiraling of an infinite ray toward $\gamma$ induces an orientation on this closed curve, we are assuming that the two disjoint rays spiraling toward $\gamma$ from each side induce on it different orientations.
We first observe that both $\gamma$ and $\ell$ are preserved under the hyperelliptic involution $\iota$ on $S$. And just as with the case when $\lambda$ is irrational, the crowned hyperbolic surface $C:=S-(\gamma\cup\ell)$ is obtained by gluing together two isometric extended ideal Saccheri quadrilaterals (see again Figure \ref{fig:crown}) whose partial horocyclic foliations perfectly align, since they both meet $\lambda$ perpendicularly. Again, the upshot is that we obtain a $k$-expansion map on $C$ which expands along its boundary components by a factor of $k$. As with the last step of the proof for the irrational $\lambda$ case, we define $h_t$ on $C$ by pullback with respect to the $e^t$-expansion map, and extend $h_t$ over $\lambda=\gamma\cup\ell$ either by invoking \cite[Proposition~4.1]{Thurston1986} or more na\"ively by observing that the pullback metric on $C$ glues continuously on the tangent spaces over $\gamma$ and $\ell$. Note that this na\"ive argument fails to be rigorous if $\lambda$ is irrational as $\lambda$ contains more than just the boundary geodesics on $S-\lambda$.\footnote{There are, in fact, uncountably many leaves in $\lambda$, and only finitely many lie on the boundary.}

\medskip

\noindent\textbf{Geodesic ray $(S,h_t)$.} To complete the proof, we show that the family of equivalence classes of metrics $(S,h_t)$ for $t\geq 0$ is a geodesic ray in $\mathcal{T}(S,b)$ for both $L$ and $K$. By construction, the stretch map $\mathrm{id}_S: (S,h_0)\to (S,h_t)$, for $t\geq0$, is an $e^{t}$-Lipschitz map which stretches $\lambda$ by a factor of $e^{t}$. This means that the measured lamination support in $\lambda$ realizes the maximum curve ratio between $(S,h_0)$ and $(S,h_t)$ for all time and $K(h_0,h_t)=t$. Moreover, observe that the composition of the $e^s$-expansion map and the $e^t$-expansion map on the ideal Saccheri quadrilateral is precisely the $e^{s+t}$-expansion map
. This tells us that the stretch map $\mathrm{id}_S:(S,h_0)\to(S,h_s)$ composed with the stretch map $\mathrm{id}_S:(S,h_s)\to(S,h_{s+t})$ is precisely equal to the stretch map $\mathrm{id}_S:(S,h_0)\to(S,h_{s+t})$. Therefore, we see that the equivalence class of 
\begin{align}
K(h_s,h_{s+t})=t\text{ for all }s,t\geq0,
\end{align}
and hence the equivalence classes of $(S,h_t)$, for $t\geq 0$, form a geodesic ray for the curve metric $K$. Finally, the Lipschitz metric is at least the curve metric. Thus, we have
\begin{align}
t\geq L(h_s,h_{s+t})\geq K(h_s,h_{s+t})=t,
\end{align}
where the first inequality follows from the stretch map $\mathrm{id}_S:(S,h_s)\to(S,h_{s+t})$ being $e^t$-Lipschitz. This in turn tells us that the equivalence class of $(S,h_t)$ is a geodesic ray for $L$.
\end{proof}

The partial measured foliations on the two ideal Saccheri quadrilaterals, when they are glued together,  define a measured foliation class $F$ on the torus with one boundary component.
When the Teichm\"uller space of this surface with boundary is equipped with its Thurston boundary, we have the following:

\begin{theorem}
The geodesic ray $(S,h_t)$ defined in Theorem \ref{thm:pstretch} converges, as $t\to\infty$,  to the projective class $[F]$ of  $F$, considered  as an element of Thurston's boundary of the Teichm\"uller space of the torus with one boundary component. 
\end{theorem}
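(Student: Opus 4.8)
The plan is to show that the hyperbolic lengths of curves, measured in the metrics $h_t$, are asymptotically governed by their intersection number with the measured foliation $F$, and to conclude convergence in Thurston's boundary from this. Recall that Thurston's boundary $\mathcal{PMF}$ of $\mathcal{T}(S,b)$ is the projectivization of the space of measured foliations, and that a sequence of points $h_t$ converges to $[F]$ precisely when, after suitable rescaling, the length functions $\gamma\mapsto \ell_{h_t}(\gamma)$ converge (projectively) to the intersection function $\gamma\mapsto i(\gamma,F)$ on the set of simple closed curves (equivalently on $\mathcal{S}\cup\mathcal{A}$). So the task reduces to an asymptotic length estimate.

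First I would make precise the foliation $F$: it is the horocyclic measured foliation glued from the two ideal Saccheri quadrilaterals, with transverse measure given by arclength along the leaves of the partial horocyclic foliation (the measure that the $e^t$-expansion map contracts by $e^{-t}$). The two foliations on the quadrilaterals match along the cutting lamination precisely because of the $\iota$-invariance established in the proof of Theorem~\ref{thm:pstretch}, so $F$ is a well-defined measured foliation class on $S$. Next I would track, for a fixed simple closed geodesic $\gamma$, how $\ell_{h_t}(\gamma)$ grows. The identity map $(S,h_0)\to(S,h_t)$ is an isometry in the geodesic (expanding) direction transverse to the leaves and contracts horocyclic leaves by $e^{-t}$; equivalently, the $h_t$-geodesic representative of $\gamma$ spends its length in two regimes. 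The portion travelling along (or near) the stretched lamination $\lambda$ contributes length that is $e^t$ times the $h_0$-length along $\lambda$, while the portion crossing leaves of the horocyclic foliation contributes a bounded amount. The key quantitative point is that crossing the foliation transversally costs, up to an additive $O(1)$ error independent of $t$, precisely the transverse measure $i(\gamma,F)$ — this is the standard fact that in a metric built from a horocyclic foliation, the length of a transversal is asymptotic to its horocyclic transverse measure plus a controlled contribution from the unfoliated core.

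The central estimate I would establish is therefore
\begin{equation}
\ell_{h_t}(\gamma)=e^{t}\, i(\gamma,F)+O(1),
\end{equation}
uniformly in $t\ge 0$, with the $O(1)$ error depending on $\gamma$ but bounded in $t$. This follows by decomposing a taut representative of $\gamma$ into arcs lying in the horocyclically foliated bands and arcs in the compact unfoliated region; the $e^t$-expansion map scales the transverse (geodesic) direction by $e^t$ and leaves the unfoliated region fixed, so the foliated contribution is exactly $e^{t}i(\gamma,F)$ while the unfoliated contribution stays bounded. Dividing by $e^t$ and letting $t\to\infty$ gives $e^{-t}\ell_{h_t}(\gamma)\to i(\gamma,F)$ for every $\gamma$, which is exactly the statement that $[h_t]\to[F]$ in $\mathcal{PMF}$. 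I would verify that this convergence is compatible with the (one-dimensional) nature of $\mathcal{PMF}$ for the one-holed torus — here the projective measured foliation space is a circle, and checking convergence on a generating pair of curves suffices — and that the boundary-length constraint $b$ plays no role since $\lambda$ is supported in the interior of the convex core.

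The main obstacle I expect is making the additive error term genuinely uniform in $t$, particularly in cases (b) and (c) of the horocyclic foliation where the unfoliated region degenerates or the quadrilateral must be extended: I must confirm that a geodesic arc of $\gamma$ cannot accumulate unbounded length by repeatedly entering and leaving the unfoliated region as $t$ grows. The cleanest way around this is to bound the combinatorics of how often a taut geodesic representative crosses between foliated and unfoliated regions by a constant depending only on $\gamma$ (via intersection number with a fixed finite set of cutting arcs), after which uniformity of the $O(1)$ term is automatic. Alternatively, one can sidestep the estimate entirely by invoking the doubling embedding $\mathcal{T}(\bar S)\hookrightarrow\mathcal{T}(dS)$ from \cite[Corollary~2.9]{2009f} together with Thurston's analogous convergence result for genuine stretch rays on the closed surface $dS$, pulling the statement back along the double; I would present the direct foliation estimate as the main argument and remark on the doubling route as a consistency check.
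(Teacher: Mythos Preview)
Your approach is essentially the one the paper invokes: the paper's proof is a one-line reference to \cite[Theorem~3.7]{Papa1988}, whose engine is precisely the double inequality \cite[Proposition~3.1]{Papa1988} comparing hyperbolic length and intersection number, i.e.\ your estimate $\ell_{h_t}(\gamma)=e^{t}\,i(\gamma,F)+O(1)$.

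Two small points. First, your description of the transverse measure on $F$ is reversed: the transverse measure of the horocyclic foliation is arclength in the direction \emph{transverse} to the horocycles (the geodesic direction that the $e^t$-expansion map \emph{expands}), not ``arclength along the leaves''. Your subsequent estimate uses the correct quantity, so this is only a slip in the exposition. Second, the alternative doubling route you suggest does not work as stated: the partial stretch ray on $S$ is specifically designed to fix the boundary length, so under the doubling embedding it does \emph{not} correspond to a genuine Thurston stretch ray on $dS$ (a Thurston stretch on $dS$ would stretch the separating geodesic). The direct length--intersection estimate is the right argument; drop the doubling remark.
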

\begin{proof}[Sketch of proof] The same argument used in the proof of \cite[Theorem 3.7]{Papa1988} which established the analogous property for a surface without boundary applies here. It is ultimately hinged on a double inequality \cite[Proposition 3.1]{Papa1988} comparing hyperbolic length and intersection number for all curves in $(S,h_t)$.
\end{proof}

\section{The Lipschitz, arc and curve metrics on the Teichm\"uller space of the one-holed torus}\label{s:one-holed}

\subsection{The Lipschitz metric versus the curve metric on $\mathcal{T}(S=S_{1,1},b)$}
\label{sec:K=L}

In this subsection we establish a generalization of Thurston's \cite[Corollary~8.5]{Thurston1986}. 

\begin{theorem}
\label{thm:L=K}
 The   Lipschitz metric and the curve metric  on the Teichm\"uller space $\mathcal{T}(S_{1,1},b)$, defined respectively by \eqref{L} and \eqref{K},  coincide. Furthermore, this space, equipped with such a metric,  is a geodesic  space.

\end{theorem}

As previously discussed in section~\ref{s:thumetforbordered}, \cite[Corollary~1.12]{GK} show that $L'\equiv K$ for a version $L'$ of the Lipschitz metric that infimizes Lipschitz constants over a greater class of maps than homeomorphisms. This, combined with Thurston's result that $L\equiv K$ for finite-area complete hyperbolic surfaces (without borders) raises the hypothesis that perhaps $L\equiv K$ still holds true over the Teichm\"uller space of bordered hyperbolic surfaces. We resolve this in the affirmative for $\mathcal{T}(S_{1,1},b)$ by adapting Thurston's proof in the case of surfaces without boundary, which relies fundamentally upon \cite[Theorem~8.2]{Thurston1986} and \cite[Theorem~8.4]{Thurston1986}. We start with the following definition, which is a correction of the definition taken from \cite[paragraph before Theorem~8.2]{Thurston1986}:

\begin{definition}[Ratio-maximizing laminations]
Given a pair of marked hyperbolic metrics $h_0,h_1$ in $\mathcal{T}(S,b)$, we say that a (non-necessarily measured) geodesic lamination $\lambda$ is \emph{ratio-maximizing} if there exists a Lipschitz homeomorphism with optimal (i.e. minimal) Lipschitz constant $k=e^{K(h_0,h_1)}$, mapping from a neighborhood of $\lambda$ in $(S,h_0)$ to a neighborhood of $\lambda$ in $(S,h_1)$, which is in the correct homotopy class. Moreover, we require that such an optimal Lipschitz homeomorphism necessarily takes the (geodesic realisation of) leaves of $\lambda$ in $(S,h_0)$ to the corresponding (geodesic realisation of) leaves of $\lambda$ in $(S,h_1)$, locally stretching tangent vectors along each leaf of $\lambda$ by a factor of $k$.
\end{definition}

We already know from \cite[Proposition~4.1 or Theorem~8.1]{Thurston1986} that such a neighborhood map exists if $\lambda$ is contained in the support of a projective measured lamination maximizing $K$.\footnote{This measured lamination is unique when $S=S_{1,1}$.} Thurston's insightful observation is that his notion of ``(length-)ratio-maximizing'' laminations can be extended to chain recurrent (non-necessarily measured) geodesic laminations containing isolated bi-infinite geodesic leaves provided that one requires also that a neighborhood of the lamination be mapped across so as to preserve the optimal Lipschitz constant. Fortunately, the proofs for \cite[Theorems~8.2 and 8.4]{Thurston1986} hold in our context, and thus we conclude:

\begin{theorem}[{\cite[Theorems~8.2 and 8.4]{Thurston1986}}]
\label{thurstonthm}
There is a unique chain recurrent geodesic lamination which is ratio-maximizing and contains all other ratio-maximizing chain recurrent geodesic laminations for the pair $h_0,h_1$. We refer to it as the \emph{maximal ratio-maximizing lamination} and denote it by $\mu(h_0,h_1)$. These laminations have the following property: if $\{h_0^{(i)}\}$ and $\{h_1^{(i)}\}$ are sequences of complete hyperbolic structures in $\mathcal{T}(S,b)$ which respectively converge to $h_0$ and $h_1$, then $\mu(h_0,h_1)$ contains any lamination in the limit set of $\mu(h_0^{(i)},h_1^{(i)})$ with respect to the Hausdorff topology on the set of geodesic laminations on $(S,h)$.
\end{theorem}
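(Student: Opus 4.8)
The plan is to establish the theorem in two logically separate pieces: first the existence and uniqueness of a maximal ratio-maximizing chain recurrent lamination, and second the upper semicontinuity property under Hausdorff limits. Since the author explicitly states that the proofs of Thurston's Theorems~8.2 and~8.4 carry over to the bordered setting, my job is really to verify that the combinatorial and topological inputs those proofs rely on remain valid for $\mathcal{T}(S_{1,1},b)$, and to flag where the bordered case genuinely differs. I would begin by recalling the key structural fact supplied by Lemma~\ref{thm:chainrecurrent}: on the one-holed torus, chain recurrent laminations come in exactly three combinatorial types, and the irrational ones are already maximal (their complement is an annulus). This finiteness of combinatorial type is what makes the existence argument tractable here.

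For the existence and uniqueness half, I would show that the collection of ratio-maximizing chain recurrent laminations is closed under taking a common extension. The crucial lemma (Thurston's 8.2) is that if $\lambda_1$ and $\lambda_2$ are both ratio-maximizing, then so is any chain recurrent lamination containing $\lambda_1 \cup \lambda_2$; the mechanism is that an optimal Lipschitz map must stretch \emph{every} leaf of a ratio-maximizing lamination maximally, and two such laminations cannot locally conflict because the stretch factor is pinned at $k = e^{K(h_0,h_1)}$ on each. I would argue that since there is at most one irrational measured lamination type in any ratio-maximizing family that is maximal on its own, and since the closed-geodesic and geodesic-plus-spiraling-leaf cases $(1)$ and $(2)$ can always be amalgamated into a single chain recurrent lamination, the union of all ratio-maximizing chain recurrent laminations is again chain recurrent and ratio-maximizing. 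This union is then $\mu(h_0,h_1)$ by construction, and it is unique because the supremum defining $K$ is realized simultaneously by every leaf it contains.

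For the semicontinuity half, I would take sequences $h_0^{(i)} \to h_0$ and $h_1^{(i)} \to h_1$ and a Hausdorff-convergent subsequence of $\mu(h_0^{(i)}, h_1^{(i)})$ with limit $\nu$. The goal is to show $\nu \subseteq \mu(h_0,h_1)$, which by the maximality just established reduces to showing $\nu$ is itself ratio-maximizing for the limit pair $h_0, h_1$. Here I would invoke continuity of the optimal Lipschitz constant, namely that $K(h_0^{(i)}, h_1^{(i)}) \to K(h_0, h_1)$, together with the fact that the neighborhood maps witnessing ratio-maximization pass to a limit: an $e^{K(h_0^{(i)},h_1^{(i)})}$-Lipschitz map on a neighborhood of $\mu(h_0^{(i)},h_1^{(i)})$ converges, along a further subsequence, to an $e^{K(h_0,h_1)}$-Lipschitz map on a neighborhood of $\nu$ in the correct homotopy class. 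Because $\nu$ is a Hausdorff limit of chain recurrent laminations, it is itself chain recurrent, so it qualifies as a ratio-maximizing chain recurrent lamination and hence sits inside $\mu(h_0,h_1)$.

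The main obstacle I anticipate is the limiting argument for the neighborhood maps in the second half: one must ensure that the \emph{domains} of the $e^{K(h_0^{(i)},h_1^{(i)})}$-Lipschitz neighborhood maps do not collapse or shrink to nothing as $i \to \infty$, so that a genuine neighborhood of $\nu$ survives in the limit. Thurston handles this via a compactness/Arzel\`a--Ascoli argument for Lipschitz maps with uniformly bounded constants, but in the bordered setting one must additionally check that the boundary monodromy constraint (fixed length $b$) is compatible with passage to the limit and does not interfere with the homotopy class being preserved. I expect this to be manageable because the partial stretch map construction of Theorem~\ref{thm:pstretch} produces maps that are isometries outside a compact set in the convex core, so the relevant Lipschitz behavior is confined to a compact region where Arzel\`a--Ascoli applies cleanly; nevertheless, verifying uniform control over this compact region as the metrics vary is the delicate point that the phrase ``the proofs hold true in our context'' is quietly asserting.
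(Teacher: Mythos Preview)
The paper does not supply its own proof of this theorem: it simply states the result and asserts, in the sentence immediately preceding it, that ``the proofs for \cite[Theorems~8.2 and 8.4]{Thurston1986} hold true in our context.'' There is no \texttt{proof} environment attached to Theorem~\ref{thurstonthm} in the paper. Your proposal therefore goes well beyond what the paper itself offers, by sketching how Thurston's original arguments would be adapted to $\mathcal{T}(S_{1,1},b)$.

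Your outline is a faithful reconstruction of Thurston's strategy and correctly identifies the two halves (closure under union for existence/uniqueness, Arzel\`a--Ascoli-type limiting for semicontinuity). A couple of points deserve care if you were to write this out fully. First, in the existence half, the content of Thurston's 8.2 is not merely that a chain recurrent lamination \emph{containing} $\lambda_1\cup\lambda_2$ is ratio-maximizing, but that $\lambda_1\cup\lambda_2$ is itself a geodesic lamination (no transverse crossings) and is ratio-maximizing; one must argue that two ratio-maximizing laminations cannot intersect transversely, which Thurston does via the Lipschitz constraint at a crossing point. Second, your appeal to continuity of $K$ in the semicontinuity half is not circular, but note that at this point in the paper $L\equiv K$ has not yet been established (Theorem~\ref{thm:L=K} uses the present theorem), so you should justify continuity of $K$ directly from length-function continuity and compactness of the space of projective measured laminations, rather than via $L$. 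Your identification of the domain-shrinkage issue for the neighborhood maps is exactly the delicate point Thurston's argument addresses, and your observation that the bordered setting confines the interesting behavior to a compact region inside the convex core is the right reason why the adaptation goes through.
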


\begin{remark}
Although the above result is stated for sequences of metrics $\{h_0^{(i)}\}$ and $\{h_1^{(i)}\}$, the result obviously also applies to continuous paths 
\[
\hat{h}_0(t),\hat{h}_1(t):[0,1]\to\mathcal{T}(S,b)
\] 
of metrics which evaluate to $\hat{h}_0(1)=h_0$ and $\hat{h}_1(1)=h_1$ at time $t=1$. 
\end{remark}

\begin{lemma}\label{firsttime}
Given $h_0,h_1\in\mathcal{T}(S,b)$, let $\lambda$ be any maximal geodesic lamination that contains the maximal ratio-maximizing lamination $\mu(h_0,h_1)$ as a sublamination. Consider the partial stretch path
\[
\hat{h}_t:=\operatorname{pstretch}(h_0,\mu(h_0,h_1),tK(h_0,h_1)).
\]
Then either $\mu(\hat{h}_{t},h_1)$ is constant, or there is a first time $t_1\in(0,1)$ such that $\mu(\hat{h}_{t_1},h_1)$ differs from $\mu(h_0,h_1)$.
\end{lemma}

\begin{proof}
Let us assume, for a proof by contradiction, that $\mu(\hat{h}_{t},h_1)$ is not constant and that there is no such first time. Then, let $t_1$ be the infimum of all times $t\in[0,1)$ such that $\mu(\hat{h}_t,h_1)$ differs from $\mu(h_0,h_1)$. Either $t_1=0$, or there is a small interval $(0,t_1)$ such that for all $s\in(0,t_1)$, $\mu(\hat{h}_{s},h_1)=\mu(h_0,h_1)$. Theorem~\ref{thurstonthm} ensures that in both  cases, $\mu(h_0,h_1)$ is a sublamination of $\mu(\hat{h}_{t_1},h_1)$. By assumption, there is a sequence of times $s$ decreasing to $t_1$ for which $\mu(\hat{h}_s,h_1)$ differs from $\mu(h_0,h_1)$. For any such $s$ sufficiently close to $t_1$, Theorem~\ref{thurstonthm} tells us that $\mu(\hat{h}_s,h_1)$ lies in a small neighborhood $N$ of $\mu(\hat{h}_{t_1},h_1)$.\medskip

The construction of the partial stretch map asserts that there is an $e^{(1-s)K(h_0,h_1)}$-Lipschitz map from $N$ on $\hat{h}_s$ to $N$ on $h_1$ which realises the length ratio between $\hat{h}_s$ and $h_1$ of some measured lamination supported on $\mu(h_0,h_1)$. In particular, this means that all other measured laminations supported on $N$ have lower length ratio (between $\hat{h}_s$ and $h_1$) than $e^{(1-s)K(h_0,h_1)}$. Now, since $\mu(\hat{h}_s,h_1)$ lies in $N$ and contains the support of some measured lamination, the optimal Lipschitz constant $e^{K(\hat{h}_s,h_1)}$ associated to $\mu(\hat{h}_s,h_1)$ being a ratio-maximizing lamination means that $e^{K(\hat{h}_s,h_1)}\leq e^{(1-s)K(h_0,h_1)}$. On the other hand, since $N$ contains only a subset of the measured laminations on $S$, $e^{(1-s)K(h_0,h_1)}$ must be less than or equal to $e^{K(\hat{h}_s,h_1)}$. Therefore, $e^{K(\hat{h}_s,h_1)}= e^{(1-s)K(h_0,h_1)}$. We therefore see that mapping $\mu(\hat{h}_s,h_1)$ stretches it by $e^{(1-s)K(h_0,h_1)}$ and hence $\mu(\hat{h}_s,h_1)$ is a sublamination of $\mu(h_0,h_1)$. Conversely, the fact that $e^{K(\hat{h}_s,h_1)}= e^{(1-s)K(h_0,h_1)}$ means that $\mu(h_0,h_1)$ is a ratio-maximizing lamination on $\hat{h}_s$, and is hence definitionally a sublamination of $\mu(\hat{h}_s,h_1)$. Therefore, $\mu(h_0,h_1)=\mu(\hat{h}_s,h_1)$ for all $s>t_1$ sufficiently close to $t_1$, which is a contradiction.
\end{proof}

We are now well-equipped to prove that the Lipschitz metric and the curve metric agree (Theorem~\ref{thm:L=K}).

\begin{proof}[Proof of Theorem~\ref{thm:L=K}]
We first note that $\mu(h_0,h_1)$ cannot contain the boundary of the convex core as every geodesic arc on $\mu(h_0,h_1)$ is definitionally stretched by a factor of $k=e^{K(h_0,h_1)}$, whereas the total length of the boundary remains fixed. Indeed, since the boundary length is always fixed, every ratio-maximizing (chain recurrent) lamination we encounter during the course of this proof must lie within the interior of the convex core. Since $\mu(h_0,h_1)$ is chain recurrent and supported on the interior of the convex core, it lies in one of the three types of laminations described in Lemma~\ref{thm:chainrecurrent}.\medskip

\noindent\textbf{Type~(2) and (3) laminations:} when $\mu(h_0,h_1)$ is a maximal chain recurrent lamination (i.e. case~$(2)$ and $(3)$ in Lemma~\ref{thm:chainrecurrent}), there is a unique partial stretch path
\[
\hat{h}_t:=\operatorname{pstretch}(h_0,\mu(h_0,h_1),tK(h_0,h_1))
\]
which stretches along $\mu(h_0,h_1)$. We claim that this geodesic ray must reach $h_1$ at time $1$. Assume not, then by Lemma~\ref{firsttime}, the maximal ratio-maximizing lamination $\mu(\hat{h}_t,h_1)$ must differ from $\mu(h_1,h_0)$ for some first time $t_1\in(0,1)$. However, Theorem~\ref{thurstonthm} then ensures that $\mu(h_0,h_1)$ is a proper subset of $\mu(\hat{h}_{t_1},h_1)$, which is impossible due to the maximality of $\mu(h_0,h_1)$ among all chain recurrent laminations (supported on the interior of the convex core).\medskip

\noindent\textbf{Type~(1) laminations:} the remaining case is when $\mu(h_0,h_1)$ is a simple closed geodesic $\gamma$ (i.e. type~(1) in Lemma~\ref{thm:chainrecurrent}), in which event there are precisely two maximal chain recurrent laminations $\mu_\pm$ containing $\mu(h_0,h_1)$ which are supported on the convex core interior. We partially stretch along $\mu_+$ without loss of generality, and note that we cannot reach $h_1$ or else $\mu_+$ would be a maximal ratio-maximizing lamination and hence we would have $\gamma=\mu(h_0,h_1)=\mu_+$. Thus, by Lemma~\ref{firsttime} and Theorem~\ref{thurstonthm}, there is a first time $t_1\in(0,1)$ where the maximal ratio-maximizing lamination between $\mathrm{pstretch}(h_0,\mu_+,t_1K(h_0,h_1))$ and $h_1$ becomes a proper chain recurrent superlamination of $\gamma$. The only possible candidates are either $\mu_+$ or $\mu_-$, which are both maximal among chain recurrent laminations. This reduces our proof to the previous case, and we see that we will eventually reach $h_1$ partially stretching with respect to this new lamination. In particular, since leaves of all measured lamination supported inside of $\mu(h_0,h_1)$ are uniformly and maximally stretched through the entire concatenated stretch path, this process must end precisely at time $t=1$. As a minor aside, note that the second concatenated partial stretch path cannot be stretched with respect to $\mu_+$ as then we would have $\gamma=\mu(h_0,h_1)=\mu_+$, and hence must be with respect to $\mu_-$. \medskip

We have constructed geodesics for both $L$ and $K$ which join arbitrary points $(S,h_0)$ and $(S,h_1)$ in $\mathcal{T}(S,b)$. Moreover, by construction, the Lipschitz constant of the partial stretch map at $t=1$ is $e^{K(h_0,h_1)}$. Therefore, the two metrics must be equal, as desired.
\end{proof}

The following corollary is immediate from the proof of Theorem~\ref{thm:L=K}.

\begin{corollary}
Any two points in $\mathcal{T}(S,b)$ are joined by a Thurston geodesic which is the concatenation of at most two partial stretch paths.
\end{corollary}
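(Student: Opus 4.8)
The plan is simply to count the partial stretch segments produced in the proof of Theorem~\ref{thm:L=K}. First I would recall that the construction there builds the geodesic from $h_0$ to $h_1$ by stretching along maximal chain recurrent laminations containing the maximal ratio-maximizing lamination $\mu(h_0,h_1)$, switching laminations precisely when the maximal ratio-maximizing lamination changes topological type. The task is therefore to bound the number of such switches by one.

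By Lemma~\ref{thm:chainrecurrent}, $\mu(h_0,h_1)$ falls into one of three classes. In cases~$(2)$ and $(3)$ the lamination is already maximal, and the proof produces a single partial stretch path $\operatorname{pstretch}(h_0,\mu(h_0,h_1),tK(h_0,h_1))$ reaching $h_1$ at time $t=1$; this is a concatenation of one, hence at most two, partial stretch paths.

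The only remaining case is when $\mu(h_0,h_1)$ is a simple closed geodesic $\gamma$ (case~$(1)$). Here there are exactly two maximal chain recurrent laminations $\mu_\pm$ containing $\gamma$, and one begins by stretching along one of them, say $\mu_+$. Either this path reaches $h_1$ directly (a single segment), or the maximal ratio-maximizing lamination changes at a first time $0<s<1$. The key observation---and essentially the only content---is that it can change at most once: Theorem~\ref{thurstonthm} forces the new lamination $\mu'$ to contain $\mu(h_0,h_1)=\gamma$, so $\mu'\in\{\mu_+,\mu_-\}$; it cannot be $\mu_+$ (else $\mu_+$ would already have been the maximal ratio-maximizing lamination at $h_0$), hence $\mu'=\mu_-$. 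Since $\mu_-$ is itself maximal among chain recurrent laminations supported on the convex core interior, the same Hausdorff limit-set containment from Theorem~\ref{thurstonthm} prohibits any further topological change, so the stretch along $\mu_-$ reaches $h_1$ at time $t=1$ with no further switches.

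Thus in every case the geodesic is the concatenation of the stretch along $\mu_+$ over $[0,s]$ with the stretch along $\mu_-$ over $[s,1]$---at most two partial stretch paths. I do not anticipate any genuine obstacle here: the statement is a bookkeeping consequence of the at-most-one-switch phenomenon already isolated in the proof of Theorem~\ref{thm:L=K}, whose justification rests entirely on the maximality of $\mu_-$ together with the limit-set property of the maximal ratio-maximizing lamination furnished by Theorem~\ref{thurstonthm}.
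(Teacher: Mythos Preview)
Your proposal is correct and follows exactly the paper's approach: the paper states that the corollary is ``immediate from the proof of Theorem~\ref{thm:L=K},'' and you have simply made explicit the at-most-one-switch count already contained in that proof. There is nothing to add.
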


\subsection{The curve metric versus the arc metric on $\mathcal{T}(S,b)$}
\label{sec:KvsA}

\begin{theorem}\label{th:KA}
If $b\leq 4\operatorname{arcsinh}(1)$, then the curve metric $K$ and the arc metric $A$ on $\mathcal{T}(S,b)$ are equal.
\end{theorem}

\begin{proof}
When $b\leq 4\operatorname{arcsinh}(1)$, the partial horocyclic foliation (as depicted by the blue lines in Figures~\ref{fig:horocyclic}, \ref{fig:Saccheri-stretch}, and \ref{fig:crown}) is completely contained in the convex core of $S$ and hence any partial stretch map simply evaluates to being the identity map on the convex core boundary. In particular, this says that the Lipschitz constant for the optimal Lipschitz map $\mathrm{id}_S: (S,h_0)\to(S,h_1)$  constructed in \S \ref{sub:partial-stretch} is the same as the constant for the optimal Lipschitz map $\mathrm{id}_{\bar{S}}: (\bar{S},\bar{h}_0)\to(\bar{S},\bar{h}_1)$ which extends it, and hence we have the following chain of inequalities:
\begin{align*}
L(h_0,h_1)=K(h_0,h_1)\leq A(\bar{h}_0,\bar{h}_1)\leq L(\bar{h}_0,\bar{h}_1)=L(h_0,h_1),\notag
\end{align*}
where the first inequality is explained in Remark~\ref{AKcompare} and the second inequality is a general consequence of Lipschitz metrics being at least as great as length-ratio-type metrics.
\end{proof}

In contrast:

\begin{theorem}\label{th:inequality}
The arc metric is strictly greater than the curve (or the arc)  metric on $\mathcal{T}(S,b)$ for all sufficiently large $b$. That is to say, there exist hyperbolic structures $h_0,h_1\in\mathcal{T}(S,b)$ such that
\[
K(h_0,h_1)< A(h_0,h_1).
\]
\end{theorem}

\begin{proof}
We sketch the construction required for this proof. Consider a right-angled hexagon $H_0$ with alternating sidelengths
\[
2\operatorname{arcosh}(x^4),\operatorname{arcsinh}(x^2)\text{ and }\operatorname{arcsinh}(x^2);
\]
and consider another right-angled hexagon $H_1$ with alternating sidelengths
\[
2\operatorname{arcosh}(x^4),\operatorname{arcsinh}(x^3)\text{ and }\operatorname{arcsinh}(x^3),
\]
where $x$ is understood to be a very large number (see Figure~\ref{fig:hexagons}). \medskip

\begin{figure}[!ht]
\centering
\includegraphics[width=0.75\linewidth]{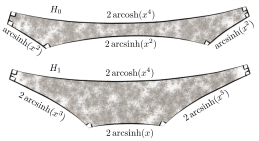}   
\caption{\small {Two ``skinny'' hexagons $H_0$ and $H_1$.}}   
\label{fig:hexagons}
\end{figure}

We double each $H_i$ to obtain a pair of pants $P_i$ in such a way that the above listed sides remain unglued and form the boundary of the $P_i$. Then, in each $P_i$, we glue the two boundary components of equal length with no twisting to form two geodesic-bordered one-holed tori $\bar{T}_0$ and $\bar{T}_1$ in $\mathcal{T}(\bar{S},4\operatorname{arcosh}(x^4))$. The two shortest interior simple closed geodesics $\alpha,\beta$ (see Figure~\ref{fig:thintorus}) on $\bar{T}_0$ are both of length $2\operatorname{arcsinh}(x^2)$, whereas on $\bar{T}_1$ they are respectively of lengths $2\operatorname{arcsinh}(x)$ and $2\operatorname{arcsinh}(x^3)$.\medskip

\begin{figure}[!ht]
\centering
\includegraphics[width=0.5\linewidth]{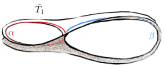}   
\caption{\small {The ``skinny'' torus $\bar{T}_1$ and its two shortest geodesics of lengths $l_\alpha=2\operatorname{arcsinh}(x)$ and $l_\beta=2\operatorname{arcsinh}(x^3)$.}}   
\label{fig:thintorus}
\end{figure}

We regard $\alpha$ and $\beta$ as standard $\mathbb{Z}$-basis vectors $(1,0)$ and $(0,1)$ for $H_1(\bar{T}_i;\mathbb{Z})=\mathbb{Z}^2$. The primitive elements of $H_1(\bar{T}_i;\mathbb{Z})$ naturally biject with the collection of simple closed geodesics on $\bar{T}_i$. In particular, since these tori are so thin, the hyperbolic length of a geodesic $\gamma$ whose homology class is $(p,q)$ is approximately
\begin{align*}
2\operatorname{arcsinh}(x^2)|p|+2\operatorname{arcsinh}(x^2)|q|&\text{ on }\bar{T}_0\text{, and}\\
2\operatorname{arcsinh}(x)|p|+2\operatorname{arcsinh}(x^3)|q|&\text{ on }\bar{T}_1.
\end{align*}
Let $T_i$ denote the infinite-area complete hyperbolic extension of $\bar{T}_i$. For large $x$, the inverse hyperbolic sine function is close to the logarithm function, and one sees that $K(T_0,T_1)\approx\frac{3}{2}$ and in particular is bounded independently of $x$. On the other hand, $A(\bar{T}_0,\bar{T}_1)$ grows without bound as $x\to\infty$ because the unique (orthogeodesic) arc on $\bar{S}$ that does not intersect $\beta$ has length, with respect to $\bar{T}_0$ and $\bar{T}_1$, given by
\[
2\operatorname{arcsinh}
\left(
\sqrt{\tfrac{x^4+1}{x^8-1}}
\right)
=
2\operatorname{arcsinh}
\left(
\tfrac{1}{\sqrt{x^4-1}}
\right)
\approx
\tfrac{2}{x^2}
\text{ and }
2\operatorname{arcsinh}
\left(
\sqrt{\tfrac{x^6+1}{x^8-1}}
\right)
\approx
\tfrac{2}{x}.
\]
These expressions are easily derived from hyperbolic trigonometric formulae (e.g. \cite[Theorem~2.3.4]{Buser}).

\end{proof}

\section{Applications to general Teichm\"uller spaces}\label{applications}

\subsection{Novel geodesics on $\mathcal{T}(S_{g,n},\vec{b})$}
\label{sec:novel}

In general, describing and establishing that a map is optimal Lipschitz even for simple examples is difficult. The partial stretch maps we describe in \S\ref{s:torus} resolve this in a concrete way for \emph{complete} hyperbolic one-holed tori with fixed boundary holonomy. Moreover, the fact that they are isometric outside of a compact set easily enables additional gluing-map-based constructions, allowing for a rich family of novel Thurston geodesics. We give the following examples:

\begin{enumerate}
\item
When the boundary geodesic representative satisfies $b\leq4\operatorname{arcsinh}(1)$, the \emph{stretch region} (i.e. the set of points on the surface where there is \emph{any} metric distortion) of the partial stretch map between the two hyperbolic structures lies within the convex core of the surface. We may double this convex core along its boundary component to yield partial stretch maps on closed genus $2$ surfaces. Varying the expansion factor $k=e^t$, we get a new class of Thurston geodesics in the Teichm\"uller space $\mathcal{T}(S_2)$ equipped with Thurston's metric.
To see that this differs from a Thurston stretch map-induced geodesic, we first observe that the length of the central curve stays constant along the above partial stretch map-induced geodesic. Furthermore, every point along our geodesic is isometric with respect, to reflection in the central curve, which means that the maximal ratio-maximizing lamination is also reflection symmetric. In particular, the first stretch path segment for a Thurston stretch map-induced geodesic is stretched along a maximal ratio-maximizing lamination which is also maximal as a geodesic lamination aside from the possible addition of leaves on punctured monogons (which cannot occur in this setting, as the complement of the maximal ratio-maximizing lamination on the two halves of the surface is either a $4$-holed sphere or a  annulus with punctures on its boundary components), and the reflection symmetry forces it to contain the central geodesic. This means that such a stretch map would expand the length of the central curve, thereby distinguishing it from our partial stretch map-induced geodesic.

\item
Whenever a topologically stable neighborhood of the stretch region of the partial stretch map may be isometrically embedded in another (not necessarily hyperbolic) surface $\Sigma$ of greater topological complexity, we obtain partial stretch maps on $\Sigma$ by setting the new Lipschitz map to be the identity on the complement of the embedded stretch region. One simple, but potentially useful instance of this arises (again) when $b\leq4\operatorname{arcsinh}(1)$ and we extend $S$ to $\Sigma=S_{g,n}$ by gluing on a genus $g-1$ surface with ${n+1}$ holes of lengths $(b,\vec{b}')$. This construction produces new geodesics for the Thurston metric in $\mathcal{T}(S_{g,n},\vec{b}')$.
\item
We may adapt Example~$(2)$ to work in greater generality by performing small metric deformations with Lipschitz constant smaller than or equal to the Lipschitz constant on $S$ (such as small Fenchel--Nielsen twists) in such a way as to not disturb the metric expansion along the embedded stretch region of $S\subset\Sigma$ . One example of this comes from adapting Example~$(1)$ when $4\operatorname{arcsinh}(1)<b\leq4\operatorname{arcosh}(\frac{3}{2})$, where we double the surface (including its extruding stretch region) and reglue to a genus $2$ surface with a $\frac{b}{4}$ twist. This enables the extruding stretch loci to fit on top of (originally) unstretched domains. The $b=4\operatorname{arcosh}(\frac{3}{2})$ instance of this particular construction is used by Lenzhen, Rafi and Tao in their proof of \cite[Theorem~1.1]{LRT} (the length of $b=4\operatorname{arcosh}(\frac{3}{2})$ can be computed using Figure~10 of their paper). Another example of this is simultaneously performing stretching along $\lambda$ as well as small earthquakes along measured laminations which do not transversely intersect $\lambda$. In any case, this is not a new idea, but a potentially useful one.
\item
In the present paper, all of our stretch maps on one-holed tori $S$ are invariant under the hyperelliptic involution $\iota$ and hence descend to $S/\iota$---a hyperbolic sphere with three $\pi$-cone angles and one hole with geodesic representative of length $\frac{b}{2}$ (see Figure~\ref{fig:foliations}$(2)$). This cone surface is also the quotient, with respect to a $\mathbb{Z}_2\times\mathbb{Z}_2$ symmetry, of a four-holed sphere $S_{0,4}$ with boundary geodesic representatives of length $\frac{b}{2}$ (see Figure~\ref{fig:foliations}$(3)$), and hence all relevant stretch maps lift to $S_{0,4}$ with four equal length boundary components. This new $S_{0,4}$ building block immediately affords new flexibility in gluing-map based constructions. 
\item
We further observe that in Example~$(4)$ above, in instances where the stretching lamination contains a simple closed curve $\gamma$ (see Figure~\ref{fig:foliations}), this $\gamma$ then descends to an interval $\gamma'$ of length $\frac{\ell_\gamma}{2}$ joining cone points in $S/\iota$ and lifts to a separating geodesic $\gamma''$ in $S_{0,4}$ of length $2l_\gamma$. Since $\gamma''$ lies on the stretching lamination, it remains geodesic under stretching, and so we may cut $S_{0,4}$ along $\gamma''$ to obtain two isometric pairs of pants $P$ of boundary lengths $2l_{\gamma},\frac{b}{2},\frac{b}{2}$. In particular, the partial stretching map on $P$ increases the length of just one of its boundary components. This gives an $S_{0,3}$ building block for potential gluing-map based Lipschitz map constructions.

\item
Finally, continuing from the previous example, we may glue the unstretched boundary components of $P$ to then obtain a stretching map on a one-holed torus which increases the boundary length but preserves the length of (at least) one interior simple closed geodesic. For $b\leq4\operatorname{arcsinh}(1)$, some of these examples may be used to construct boundary-length fixing paths in Teichm\"uller space, thereby yielding completely novel arc metric geodesics (thanks to Theorem~\ref{th:KA}).
\end{enumerate}

\begin{figure}[ht!]
\centering
\includegraphics[width=1\linewidth]{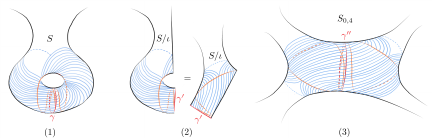}   
\caption{\small {Partial horocycle foliations (blue) on $(1)$ the one-holed torus of boundary length $L$; $(2)$ the quotient one-holed sphere with three $\pi$-cone points with boundary length $\frac{L}{2}$; $(3)$ the four-holed sphere with all boundary components of length $\frac{L}{2}$. The maximally stretched lamination consists of two geodesics: the support of the transverse measure (red) and the geodesic which spirals towards the former (orange).}} 
\label{fig:foliations}
\end{figure}

\begin{remark}
\label{rmk:curious}
The construction in Example~$(4)$ suffices to show that
\[
\left(\mathcal{T}(S_{1,1},2b),L\equiv K\right)
\text{ is isometric to }
\left(\mathcal{T}(S_{0,4},b,b,b,b),L\equiv K\right).
\]
The specialized setting when $b=0$ positively answers the first of the three cases in a question posed by Walsh in \cite[Paragraph after Theorem~7.9]{W}. We doubt that the other two cases posed, $\mathcal{T}(S_{1,2},\vec{0})$ versus $\mathcal{T}(S_{0,5},\vec{0})$ and $\mathcal{T}(S_2)$ versus $\mathcal{T}(S_{0,6},\vec{0})$, are isometric, however, it is plausible that these pairs may be made isometric, provided that one permits cone-point boundaries for $\mathcal{T}(S_{0,5},\vec{b})$ and $\mathcal{T}(S_{0,5},\vec{b})$.
\end{remark}
  
\subsection{The metric geometry of the Thurston metric.}
\label{sec:nonhyp}
We further illustrate the versatility of partial stretch maps by proving the following result inspired by \cite[Theorem~1.1]{LRT}. In this statement, a \emph{two-way geodesic} is a geodesic which is also a geodesic when traversed in the reverse direction. Note that every geodesic for a symmetric metric is two-way.

\begin{theorem}[Arbitrarily non-thin geodesic triangles]
\label{thm:nonthin}
For every $g\geq 2$ and for every $D>0$, there are points $W,X,Y\in \mathcal{T}(S_g)$ joined by two-way Thurston geodesic segments $G_{WX}$, $G_{WY}$ and $G_{XY}$ such that there is a point $Z\in G_{XY}$ which is distance at least $D$ away from $G:=G_{WX}\cup G_{WY}$ (i.e. the distances $K(Z,G)$ and $K(G,Z)$ are both greater than $D$). Moreover, $G$ is also a two-way Thurston geodesic between $X$ and $Y$.
\end{theorem}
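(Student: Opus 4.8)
The plan is to obstruct Gromov hyperbolicity in the standard way, by producing geodesic triangles that are arbitrarily far from thin. For a given $D$ I will build inside $\mathcal{T}(S_g)$ a region on which the (symmetrized) Thurston metric is, up to a controllable error, isometric to a box in $(\mathbb{R}^2,\ell^\infty)$, and then place in it the fat triangle of the sup-metric. In coordinates $(s_1,s_2)$ on such a box the configuration
\begin{equation*}
W=(0,0),\quad X=(L,L),\quad Y=(L,-L),\quad Z=(2L,0)
\end{equation*}
has $Z$ lying on the (fat) side from $X$ to $Y$ while staying at $\ell^\infty$-distance $L$ from the two thin sides $G_{WX}$ and $G_{WY}$; taking $L>D+C$, with $C$ absorbing the error term, gives the theorem once these data are realized by genuine two-way Thurston geodesics.

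\textbf{Building the flat.} For $g=2$ I would realize $S_2=T_1\cup_c T_2$ as two one-holed tori glued along a separating geodesic $c$ of fixed length $\le 4\operatorname{arcsinh}(1)$, so that by Theorem~\ref{thm:pstretch} the partial stretch maps in each $T_i$ are the identity near $c$; this lets me run a partial stretch flow independently in each torus and glue the two maps along $c$. For $g>2$ the same two tori are embedded in $S_g$ and the maps extended by the identity on the complement, as in Example~(2) of \S\ref{sec:novel}. Taking each $T_i$ deep in its thin part, I use as coordinate $s_i:=\log \ell_{\gamma_i}$ for a fixed interior curve $\gamma_i\subset T_i$: increasing $s_i$ is realized by a forward stretch along a chain recurrent lamination containing $\gamma_i$, and decreasing $s_i$ by a forward stretch along a lamination containing the dual curve $\gamma_i'$, which in the thin regime shrinks $\gamma_i$ by the reciprocal factor. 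Since the combined map is a product that is the identity near $c$ and off $T_1\cup T_2$, it is $e^{\max_i|s_i-s_i'|}$-Lipschitz, while the curves $\gamma_i,\gamma_i'$ witness the matching lower bounds; this yields $K(h(s),h(s'))=\max_i|s_i-s_i'|$ up to the thin-part error, realizing the $\ell^\infty$ box.

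\textbf{Two-way geodesics and the triangle.} The crucial point is that every coordinate-monotone segment of the box is simultaneously a forward stretch path in one direction (stretching $\gamma_i$) and a forward stretch path in the reverse direction (stretching $\gamma_i'$), hence a two-way Thurston geodesic, so that $K$ and its reverse both add up along it. I take $G_{WX}$ and $G_{WY}$ to be the two \emph{thin} (aligned) sides, realized by stretching $\gamma_1\cup\gamma_2$ and $\gamma_1\cup\gamma_2'$, and $G_{XY}$ to be the \emph{bent} path $X\to Z\to Y$, a concatenation of two such two-way partial stretch paths; a short sup-metric computation gives $K(X,Z)+K(Z,Y)=K(X,Y)$ and likewise in reverse, so $Z\in G_{XY}$. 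The same computation shows the broken path $X\to W\to Y$ is also a two-way geodesic, giving the final assertion that $G$ itself joins $X$ and $Y$. Finally, a point of $G$ has first coordinate at most $L$ whereas $Z$ has first coordinate $2L$, so $\gamma_1$ (longer at $Z$) bounds $K(G,Z)$ from below and its dual $\gamma_1'$ (longer on $G$) bounds $K(Z,G)$ from below, both by $L$ minus the thin-part error; taking $L$ large makes both exceed $D$.

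\textbf{Main obstacle.} The delicate part is the metric estimate that turns the abstract $\ell^\infty$ picture into honest Thurston distances. I must control, uniformly over the box, the thin-part asymptotics asserting that stretching $\gamma_i$ contracts $\gamma_i'$ (and hence moves $s_i$) at the reciprocal rate, and I must rule out that curves crossing $c$ --- or curves through the extra topology when $g>2$ --- ever beat $\gamma_i,\gamma_i'$ in the defining supremum, so that $K=\max_i|\cdot|$ holds with an error that stays bounded as $L\to\infty$. The Lipschitz (upper bound) side is immediate from the product structure and the fact that partial stretch maps are isometries off a compact set; it is the matching length lower bounds, together with the verification that the two competing geodesics $X\to Z\to Y$ and $X\to W\to Y$ stay far apart in the ambient metric, that need care. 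This is the Thurston-metric analogue of the sup-metric flats underlying \cite[Theorem~1.1]{LRT}.
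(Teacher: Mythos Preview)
Your plan is genuinely different from the paper's, and the difference is exactly where the gap lies. You try to realize an $\ell^\infty$-flat by stretching independently in two subsurfaces and using both $\gamma_i$ and its dual $\gamma_i'$; the paper instead keeps all stretching on a \emph{single} partial stretch line in each torus and manufactures the far point $Z$ by superimposing a Fenchel--Nielsen twist along the separating curve.

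The concrete gap is the assertion that a coordinate-monotone segment is ``simultaneously a forward stretch path in one direction (stretching $\gamma_i$) and a forward stretch path in the reverse direction (stretching $\gamma_i'$).'' This is false: the reverse of $\operatorname{pstretch}(\cdot,\lambda_i,t)$ is not a partial stretch path for any lamination, let alone one containing $\gamma_i'$. Partial stretch rays along $\gamma_i$ and along $\gamma_i'$ are distinct curves in $\mathcal{T}(S,b)$ (they limit to different Thurston boundary points), so reversing one does not land on the other, and you have no mechanism certifying that $G_{WX}$, $G_{WY}$ are two-way geodesics. The companion ``reciprocal rate'' heuristic is also off: in the thin part one has $\ell_{\gamma_i'}\approx -2\log\ell_{\gamma_i}+O(1)$, so $s_i':=\log\ell_{\gamma_i'}$ and $s_i=\log\ell_{\gamma_i}$ satisfy $s_i'\approx \log|s_i|$, \emph{not} $s_i'\approx -s_i$; the defect in your $\ell^\infty$ model is therefore unbounded as $L\to\infty$, which is precisely the regime you need.

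The paper avoids both problems. It sets $X=(h_0,h_1)$ and $Y=(h_1,h_0)$ (the same pair of torus metrics on the two sides, swapped), so that the \emph{same} product path is a forward Thurston geodesic in each direction by symmetry: the left-hand stretching dominates $X\to Y$, the right-hand stretching dominates $Y\to X$. No dual-curve or thin-part estimate enters. The far point $Z$ is then produced not by over-stretching a coordinate but by adding a large Fenchel--Nielsen twist on a collar of the separating curve --- wound up on the first half of $G_{XY}$ and unwound on the second. The twist map has Lipschitz constant small relative to $K(h_0,h_1)$, so geodesicity (in both directions) survives, while the accumulated twist alone forces $K(Z,G)$ and $K(G,Z)$ to exceed $D$. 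If you want to rescue your flat picture, the cleanest fix is to borrow these two moves: use the same torus stretch on both sides with a phase swap to get two-way-ness for free, and replace your second coordinate by a twist parameter rather than a dual-curve length.
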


\begin{proof}
We first give the proof for $g=2$. Fix $D>0$ and choose any $b<4\mathrm{arcsinh}(1)$. The main construction is the following: take a constant-speed geodesic arc $h_t:[0,1]\to \mathcal{T}(S=S_{1,1}, b)$, and glue $h_t$ to an orientation reversed family of metrics $\bar{h}_{1-t}$ along the boundary curve, with some amount of twisting $\phi_t$. We claim that if $b$ is small, if the arc is long, and if $\phi_t$ does not vary too much in $t$, this path is a two-way geodesic. Making two choices for the function $\phi_t$ that are, roughly speaking, as different as possible while meeting these constraints gives two geodesics with the same endpoints but which are very far apart at the midpoint, as required.\medskip

We consider the double $(S^d,h^d)$ of the convex hull of $(S,h)$ and denote by $\gamma$ the separating simple closed geodesic we glued along to get $S^d$. There is a small collar neighborhood $C$ around $\gamma$ that is outside of the stretch region and hence unaffected by partial stretch maps on either of the components of $S^d-\gamma$. Construct a smooth family of Lipschitz maps $\phi_t:C\rightarrow C_t$ where $C_t$ is the Fenchel--Nielsen twist of $C$ by $t$. For $\tau\gg2D\operatorname{arcsinh}(1)/b$, any metric on $S^d$ that comes from gluing (possibly with twisting) the convex cores of two $1$-holed tori in $\mathcal{T}(S,b)$, when Fenchel--Nielsen twisted by $\phi_{\tau}$ on $C$, will give a map with Lipschitz constant $\operatorname{Lip}(\phi_\tau)\gg D$. We reparametrize and rescale such a family of Fenchel--Nielsen twist deformations $\phi_{t\in[0,\tau]}$ on $C$ so that the Lipschitz constant for $\phi_t$ increases linearly with respect to $t$ and finishes at $\tau=1$.\medskip

Choose arbitrary points $h_0,h_1\in\mathcal{T}(S,b)$ such that, 
\begin{itemize}
\item
$K(h_0,h_1)\geq K(h_1,h_0)$, and the curve length ratio $K(h_0,h_1)$ is realised by a very short simple closed curve $\alpha$ on $S$. In particular, the length of $\alpha$ on both $(S,h_0)$ and $(S,h_1)$ is close to $0$,
\item
$K(h_0,h_1)> 2\operatorname{Lip}(\phi_\tau)\gg 2D$.
\end{itemize}

Consider a Thurston geodesic $h_t:[0,1]\to\mathcal{T}(S,b)$ joining $h_0$ and $h_1$ obtained via a concatenation of partial stretch maps (hence no metric distortion ever occurs in a small collar neighborhood around the boundary geodesic). Furthermore, consider $\bar{h}_t$, the same path of metrics in $\mathcal{T}(S,b)$ but with the orientations on the surface reversed. Construct a path of metrics in $\mathcal{T}(S_2)$ by gluing $h_t$ to $\bar{h}_{1-t}$, and set the endpoints of this path as $X$ and $Y$ (see Figure~\ref{fig:geodesics}). Since we can independently construct optimal Lipschitz maps on the left and the right halves of the surface with Lipschitz constants depending solely on $K(h_0,h_1)$ and $K(h_1,h_0)$, we need only consider simple closed curves which lie on these two half surfaces rather than on the entire surface when determining the optimal Lipschitz constant for the entire surface.\medskip

The path so produced is necessarily a geodesic in both directions. Going from $X$ to $Y$, the Lipschitz constant on the left of $\gamma$ is governed by the growth in the length of $\alpha$ (going from $\ell_\alpha(h_0)$ to $\ell_\alpha(h_1)$) versus the Lipschitz constant on the right of $\gamma$ being governed (roughly) by the logarithm of the reciprocal of $\alpha$ (going from $-\log\ell_\alpha(h_1))$ to $-\log\ell_\alpha(h_0)$). Thus, the path is dominated by the left of $\gamma$ when traversing from $X$ to $Y$ and the right of $\gamma$ when going from $Y$ to $X$. Set the midpoint of this geodesic as $W$ and define $G_{WX}$ as the two-way Thurston geodesic between $X$ and $W$ and $G_{WY}$ as the two-way Thurston geodesic between $W$ and $Y$.\medskip

\begin{figure}[!ht]
\centering
\includegraphics[width=1\linewidth]{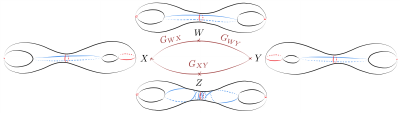}   
\caption{\small {A depiction of the closed genus $2$ hyperbolic surfaces $W,X,Y,Z$ on the geodesics $G_{WX},G_{WY}$ and $G_{XY}$. Going from $X$ to $Y$, the leftmost (red) curve is maximally stretched; from $Y$ to $X$ the rightmost (red) curve is maximally stretched; $Z$ is distance $D$ away from/to $G=G_{WX}\cup G_{WY}$ because of the twisted (blue) curve.}}   
\label{fig:geodesics}
\end{figure}

Next, we produce another two-way Thurston geodesic between $X$ and $Y$ by gluing $h_t$ and $\bar{h}_{1-t}$ with a ``twist'': we apply $\phi_t$ on $C$ until half-way (i.e. $t=\tfrac{1}{2}$) and then unwind the Fenchel--Nielsen twist with $\phi_{-t}$ until $t=1$. Since $K(h_0,h_1)\gg 2 D$ and both geodesics are uniformly parametrized (and we are producing actual Lipschitz maps for $t\in[0,1]$), the Fenchel-Nielsen twist is small enough so that the optimal Lipschitz constant from $X$ to $Y$ is unaffected along this new path, thus ensuring that it is a two-way Thurston geodesic. We label this path as $G_{XY}$ and denote the $t=\tfrac{1}{2}$ midpoint by $Z$. The Fenchel-Nielsen twist introduced along $G$ means that $Z$ is necessarily at least distance $D$ from every single point in $G$ with distances measured in either direction: consider the length of the blue curve depicted in Figure~\ref{fig:geodesics} for the distances from $G_{WX}\cup G_{WY}$ to $Z$, likewise consider a (relatively) short curve on $Z$ which minimally intersects $\gamma$ but is disjoint from the left and right $\alpha$s to bound (from below) the distances from $Z$ to $G_{WX}\cup G_{WY}$.\medskip

For general $g$, the proof is essentially the same: one simply needs to glue an additional ``unstretched'' $S_{g-1,2}$ between the convex hulls of $(S,h_0),(S,h_1)$ which cap off the two ends.
\end{proof}

Roughly speaking, the above result says that there is no na\"ive sense in which the Thurston metric can be Gromov hyperbolic  \cite{Gromov, CDP}. However, without wanting to clarify what $\delta$-hyperbolicity might mean for an asymmetric metric, we instead make the following concrete statement 
in which we use the classical notion of Gromov hyperbolicity defined by the Gromov product for  spaces with a symmetric distance function, and make no assumptions about the metric being geodesic:

\begin{corollary}
\label{thm:nothyperbolic}
The sum-symmetrization $d_{\mathrm{sum}}(h_0,h_1):=K(h_0,h_1)+K(h_1,h_0)$ of the Thurston metric on $\mathcal{T}(S_g)$ is not Gromov hyperbolic.
\end{corollary}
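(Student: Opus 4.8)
The plan is to deduce the corollary directly from Theorem~\ref{thm:nonthin}, which is the substantive content; the corollary is essentially a translation of the ``arbitrarily non-thin geodesic triangles'' statement into the language of Gromov hyperbolicity for the symmetric metric $d_{\mathrm{sum}}$. First I would recall the standard definition: a geodesic metric space is Gromov hyperbolic if there exists a uniform constant $\delta\geq 0$ such that every geodesic triangle is $\delta$-thin, meaning each side lies in the $\delta$-neighborhood of the union of the other two. To prove non-hyperbolicity it therefore suffices to exhibit, for every $\delta>0$, a single geodesic triangle which fails to be $\delta$-thin. Theorem~\ref{thm:nonthin} supplies exactly this: taking $D=\delta$, we obtain points $W,X,Y$ and two-way Thurston geodesic segments $G_{WX},G_{WY},G_{XY}$ together with a point $Z\in G_{XY}$ whose $K$-distance to $G=G_{WX}\cup G_{WY}$ is at least $\delta$ in both directions.

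The one point requiring care is that Theorem~\ref{thm:nonthin} is phrased in terms of the asymmetric metric $K$, whereas the corollary concerns its symmetrization $d_{\mathrm{sum}}$. I would resolve this by observing that the segments $G_{WX},G_{WY},G_{XY}$ are \emph{two-way} Thurston geodesics, hence they are genuine geodesics for the symmetric metric $d_{\mathrm{sum}}$, so they do assemble into a bona fide $d_{\mathrm{sum}}$-geodesic triangle. Then I would note that for any point $p$ and any set $G$ one has $d_{\mathrm{sum}}(p,G)=\inf_{q\in G}\big(K(p,q)+K(q,p)\big)\geq \max\{K(p,G),\,K(G,p)\}$, where $K(p,G):=\inf_{q\in G}K(p,q)$ and $K(G,p):=\inf_{q\in G}K(q,p)$. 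Since Theorem~\ref{thm:nonthin} guarantees both $K(Z,G)>\delta$ and $K(G,Z)>\delta$, it follows immediately that the $d_{\mathrm{sum}}$-distance from $Z$ to $G$ exceeds $\delta$, so the triangle is not $\delta$-thin.

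Finally I would assemble these observations into the conclusion: because $\delta>0$ was arbitrary, no single constant can bound the thinness of all $d_{\mathrm{sum}}$-geodesic triangles, and therefore $\big(\mathcal{T}(S_g),d_{\mathrm{sum}}\big)$ is not Gromov hyperbolic. I do not anticipate any genuine obstacle here, since all the geometric difficulty is absorbed into Theorem~\ref{thm:nonthin}; the only subtlety is the careful handling of the asymmetry, namely verifying that controlling $K$ in both directions controls $d_{\mathrm{sum}}$ and that the two-way geodesic property is exactly what licenses treating the $G_{\bullet\bullet}$ as sides of a symmetric-metric triangle.
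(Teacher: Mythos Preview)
Your argument is essentially the same as the paper's: two-way Thurston geodesics are $d_{\mathrm{sum}}$-geodesics, $d_{\mathrm{sum}}\geq K$ gives the distance lower bound for $Z$, and Theorem~\ref{thm:nonthin} with $D$ arbitrary defeats any putative $\delta$.

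There is one technical point where the paper is more careful than your proposal. You begin by recalling the thin-triangles definition of Gromov hyperbolicity \emph{for geodesic metric spaces}, but it is not clear (and likely not known) that $(\mathcal{T}(S_g),d_{\mathrm{sum}})$ is a geodesic metric space. The paper sidesteps this by adopting the Gromov-product definition, which makes sense for arbitrary metric spaces, and then invoking the standard fact that Gromov hyperbolicity in that sense forces any geodesic triangles that \emph{do} exist to be uniformly thin. Your computation then goes through unchanged. This is a small fix, but without it your opening sentence presupposes something you have not verified.
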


\begin{proof}[Proof of Corollary~\ref{thm:nothyperbolic}]
Two-way geodesics for the Thurston metric are geodesics for $d_{\mathrm{sum}}$. Hence, the edges $G_{WX},G_{WY},G_{XY}$ constitute the edges of a geodesic triangle $\triangle_{WXY}$ for $d_{\mathrm{sum}}$. It is evident that $d_{\mathrm{sum}}\geq K$, and hence $\triangle_{WXY}$ is not $D$-thin. By choosing $D>0$ to be arbitrarily large, this contradicts the condition that all geodesic triangles (if any exist) be $\delta$-thin for some $\delta>0$, which in turn is a necessary (but not necessarily sufficient, due to the potential sparsity of geodesic triangles in metric spaces which might not be geodesic) condition for Gromov's $\delta$-hyperbolicity. 
\end{proof}

\begin{remark}
 Brock and Farb, in their paper  \cite{BF}, give conditions on a metric on Teichm\"uller space which is invariant by the mapping class group action to be non-Gromov hyperbolic. Their result holds for  geodesically complete path metrics on Teichm\"uller, and cannot be used in our context since we do not know whether the sum-symmetrization of Thurston's asymmetric metric belongs to this class of metrics.
 \end{remark}
 
 We also highlight the following result which follows from the statement of Theorem~\ref{thm:nonthin}: The sum-symmetrized metric of Thurston's metric is not uniquely geodesic. 

\begin{remark}
Our proofs for Theorem~\ref{thm:nonthin} and Corollary~\ref{thm:nothyperbolic} are fairly flexible, and one extends easily to $\mathcal{T}(S_{g,n},\vec{b})$ by gluing in combinations of unstretched surfaces or stretched $S_{0,4}$ (Example~$(4)$ of \S\ref{sec:novel}) as needed. In particular, we can use the same construction in the following cases:
\begin{itemize}
\item
$g\geq2$ with arbitrary $n\geq0$;
\item
$g=1$ and $n\geq2$, with the condition that at least $4-n$ of the boundary components must have geodesic representatives of the same length $b<2\operatorname{arcsinh}(1)$;
\item
$g=0$ and $n\geq4$, with the condition that at least $8-n$ of the boundary components must have geodesic representatives of the same length $b<2\operatorname{arcsinh}(1)$,
\end{itemize}
to obtain generalizations of Theorem~\ref{thm:nonthin} and Corollary~\ref{thm:nothyperbolic}. We leave the construction of these cases to interested readers.
\end{remark}

This (informal) non-hyperbolicity of the Thurston metric (Theorem~\ref{thm:nonthin}) essentially comes from the fact that the \emph{envelope} from $X$ to $Y$ in $\mathcal{T}(S_2)$, i.e. the union of all the geodesics from $X$ to $Y$  (a notion studied in \cite{DLRT}), becomes very ``fat'' as the distance between $X$ and $Y$ increases. Indeed, the geodesics $G$ and $G_{XY}$ constructed in the proof of Theorem~\ref{thm:nonthin} fail to fellow-travel. Moreover, due to the generality of our construction, either:
\begin{enumerate}
\item
there are no arbitrarily long arc metric geodesic segments in the thick part of Teichm\"uller space, or
\item
it is impossible to have a quasi-``thin triangles''-type claim of the same form as \cite[Theorem~E]{R}, where if a side of a geodesic triangle is in the thick part of Teichm\"uller space, then it lies near to one of the other two sides. 
\end{enumerate}
Nonetheless, it seems plausible that one might recover a weaker notion of hyperbolicity such as: there exists some $\delta>0$ so that any three points $X,Y,Z$ are the vertices of some $\delta$-thin triangle. Or, perhaps the stronger statement that the envelope from $X$ to $Y$ lies within the union of the $\delta$-neighborhoods of the envelopes from $X$ to $Z$ and from $Z$ to $Y$.


\begin{thebibliography}{99}
 
\bibitem{AD} D. Alessandrini and V. Disarlo, \textit{Generalized stretch lines for surfaces with boundary}, arXiv:1911.10431.



\bibitem{BF} J. Brock and B. Farb, \textit{Curvature and rank of Teichm\"uller space},
Amer. Jour. of Mathematics
 128, (2006) N. 1, 1-22.
 
 
 \bibitem{Buser} P. Buser, \textit{Geometry and Spectra of Compact Riemann Surfaces}, Progress in Mathematics, vol. 106, Birkh\"{a}user Boston Inc., Boston, MA, 1992. 

 
\bibitem{CDP} M. Coornaert, T. Delzant,  A. Papadopoulos, \emph{G\'eom\'etrie et th\'eorie des groupes}. Les groupes hyperboliques de Gromov, Lecture Notes in Mathematics, 1441, Springer-Verlag,  1990. 


\bibitem{DGK} J. Danciger, F. Gu\'{e}ritaud, and F. Kassel, \textit{Margulis spacetimes via the arc complex}, Invent. Math. \textbf{204} (2016), no.~1, 133--193.
 
\bibitem{DLRT} D. Dumas, A. Lenzhen, K. Rafi, and J. Tao, \textit{Coarse and fine geometry of the Thurston metric},  Forum Math. Sigma 8 (2020), Paper No. e28, 58 pp. 

\bibitem{Gromov} M. Gromov,  \textit{Hyperbolic groups}. In Essays in group theory, 75-263, Math. Sci. Res. Inst. Publ., 8, Springer, New York, 1987.
 
\bibitem{GK} F. Gu\'{e}ritaud and F. Kassel, \textit{Maximally stretched laminations on geometrically finite hyperbolic manifolds}, Geom. Topol. \textbf{21} (2017), no.~2, 693--840.

\bibitem{HP} Y. Huang, and A. Papadopoulos, \textit{Optimal Lipschitz maps on bordered hyperbolic surfaces and the Thurston metric theory of Teichm\"{u}ller space}, in preparation.

\bibitem{HS} Y. Huang and Z. Sun, \textit{McShane identities for higher Teichm\"uller theory and the Goncharov-Shen potential}, Memoirs of the American Mathematical Society, to appear.


\bibitem{Kerckhoff} S. P. Kerckhoff, Earthquakes are analytics, Comm. Math. Helv. 60 (1985), p. 17-10.

\bibitem{LRT} A. Lenzhen, K. Rafi and J. Tao, \textit{The shadow of a Thurston geodesic to the curve graph}, J. Topol. \textbf{8} (2015), 1085--1118.

\bibitem{Papa1988} A. Papadopoulos, \textit{Sur le bord de Thurston de l'espace de Teichm\"uller d'une surface non compacte}, Math. Ann.  \textbf{282} (1988), 353--359.

\bibitem{2009f}  L. Liu,  A. Papadopoulos, W. Su and G. Th\'eret, \textit{On length spectrum metrics and weak metrics on Teichm\"uller spaces of surfaces with boundary}, Ann. Acad. Sci. Fenn. Math. \textbf{35} (2010), no.~1, 255--274.


\bibitem{PS} A. Papadopoulos and W. Su, \textit{Thurston's metric on Teichm\"uller space and the translation distances of mapping classes}, Ann. Acad. Sci. Fenn. Math. \textbf{41} (2016), no.~2, 867--879. 
  
\bibitem{2009h} A. Papadopoulos and  G. Th\'eret, \textit{Shortening all the simple closed geodesics on surfaces with boundary}, Proc. Amer. Math. Soc. \textbf{138} (2010), 1775--1784.  

\bibitem{2009j}  A. Papadopoulos and  G. Th\'eret, \textit{Some Lipschitz maps between hyperbolic surfaces with applications to Teichm\"uller theory}, Geom. Dedicata \textbf{150} (2011), no.~1, 233--247. 

\bibitem{2015-Yamada1} A. Papadopoulos and S. Yamada, \textit{Deforming hexagons and the arc and the Thurston metric on Teichm\"uller space}, Monatsh. Math. \textbf{172} (2017), no.~1, 97--120.

\bibitem{Parlier} H. Parlier, Lengths of geodesics on Riemann surfaces with boundary. Ann. Acad. Sci. Fenn. Math. 30 (2005), no. 2, 227--236.

\bibitem{R} K. Rafi, \textit{Hyperbolicity in Teichm\"{u}ller space}, Geom. Topol. \textbf{18} (2014), no.~5, 3025--3053.



 \bibitem{T-Notes3} W. P. Thurston,  The geometry and topology of three-manifolds, Lecture notes, Princeton, 1979. Available at http://www.msri.org/publications/books/gt3m/
 
 

\bibitem{Thurston1986} W. Thurston, \textit{Minimal stretch maps between hyperbolic surfaces}, arXiv:9801039 [math.GT], 1986.

\bibitem{Thurston-BAMS} W. P. Thurston, On the geometry and dynamics of diffeomorphisms of surfaces, Bulletin Amer. Math. Society, 19 (1988), p. 417-431.


\bibitem{W} C. Walsh, \textit{The horoboundary and isometry group of Thurston's Lipschitz metric}, Handbook of Teichm\"uller theory. Vol. IV, IRMA Lect. Math. Theor. Phys., 19,  Eur. Math. Soc., Zurich, 2014.
 


\end{thebibliography}
\end{document}